\title{Sum of Two Squares in Cyclic Quartic Fields}
\author{Wenhuan Huang}
\theoremstyle{definition}
\newtheorem{theorem}{Theorem}[section]
\newtheorem{corollary}[theorem]{Corollary}
\newtheorem{lemma}[theorem]{Lemma}
\newtheorem{example}[theorem]{Example}
\begin{document}

\maketitle

\begin{abstract}
This paper gives an algorithm to determine whether a number in a cyclic quartic field is a sum of two squares,
mainly based on local-global principle of isotropy of quadratic forms.

Keywords: Cyclic quartic field, Quadratic Forms, Isotropy
\end{abstract}

\section*{Introduction}

Previously, Qin gave an criterion determining whether a number in an arbitrary quadratic number field
is a sum of two squares in [4], which later helped him with K-Theory. The author intends to 
discover some useful information about cyclic quartic fields, to help with research on them
about K-theory and related problems.

[1] gives explicit description of cyclic quartic fields, and some of their important parameters:

\begin{theorem}
    $K$ is a (real or imaginary) cyclic quartic extension of $\mathbb{Q}$, if and only if $K=\mathbb{Q}(\sqrt{A(D+B\sqrt{D})})=\mathbb{Q}(\sqrt{A(D-B\sqrt{D})})$,
    where $A$ is odd and square-free, $D=B^2+C^2$ is square-free, $B>0$, $C>0$, $(A,D)=1$.    
\end{theorem}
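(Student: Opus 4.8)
The plan is to route everything through the unique quadratic subfield of $K$ and to convert the cyclicity of the Galois group into a norm condition. Since $\mathrm{Gal}(K/\mathbb{Q})\cong\mathbb{Z}/4\mathbb{Z}$ has a unique subgroup of order $2$, namely $\langle\sigma^2\rangle$ for a generator $\sigma$, the Galois correspondence gives a unique quadratic subfield $F=\mathbb{Q}(\sqrt{D})$ with $D$ square-free. First I would check that $F$ is real: if $K$ is not totally real, then complex conjugation is an automorphism of order $2$, hence equals $\sigma^2$, so its fixed field $F$ is totally real; thus $D>0$. Writing $K=F(\sqrt{\alpha})$ with $\alpha\in F^{\times}\setminus(F^{\times})^2$ and letting $\tau$ generate $\mathrm{Gal}(F/\mathbb{Q})$, the fact that $K/\mathbb{Q}$ is Galois forces (via Kummer theory, since $\sqrt{\tau\alpha}\in K$) a relation $\tau(\alpha)=\alpha\beta^2$ with $\beta\in F^{\times}$. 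Extending $\tau$ to $\sigma$ then gives $\sigma^2(\sqrt{\alpha})=N_{F/\mathbb{Q}}(\beta)\sqrt{\alpha}$, so $\sigma$ has order $4$ exactly when $N_{F/\mathbb{Q}}(\beta)=-1$. Computing $N_{F/\mathbb{Q}}(\alpha)=(\alpha\beta)^2$ and analyzing when $\alpha\beta$ is rational versus a rational multiple of $\sqrt{D}$, I would distill this into the clean criterion: $K=F(\sqrt{\alpha})$ is cyclic quartic over $\mathbb{Q}$ if and only if $N_{F/\mathbb{Q}}(\alpha)\in D\,(\mathbb{Q}^{\times})^2$.

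For the backward direction this criterion makes everything routine. Given the stated data, set $F=\mathbb{Q}(\sqrt{D})$; since $D=B^2+C^2$ with $B,C>0$ is square-free it is not a perfect square, so $F$ is a genuine quadratic field. With $\alpha=A(D+B\sqrt{D})$ I compute $N_{F/\mathbb{Q}}(\alpha)=A^2(D^2-B^2D)=A^2C^2D\in D\,(\mathbb{Q}^{\times})^2$, and I check $\alpha\notin(F^{\times})^2$ so that $[K:F]=2$. The criterion then yields that $K=\mathbb{Q}(\sqrt{\alpha})$ is cyclic quartic. The asserted equality of the two fields follows because $\sqrt{A(D-B\sqrt{D})}=\sqrt{N_{F/\mathbb{Q}}(\alpha)}/\sqrt{\alpha}=AC\sqrt{D}/\sqrt{\alpha}\in K$, so $\mathbb{Q}(\sqrt{A(D-B\sqrt{D})})\subseteq K$, and a dimension count forces equality.

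For the forward direction I must manufacture the parameters from an abstract cyclic quartic $K$. After clearing denominators write $\alpha=a+b\sqrt{D}$; the norm condition $a^2-Db^2=Dr^2$ forces $D\mid a^2$, hence $D\mid a$, and substituting $a=Da_1$ gives $Da_1^2=b^2+r^2$. Thus $D$ is a sum of two rational squares, and being a positive square-free integer it is a sum of two integer squares $D=B^2+C^2$ with $B,C>0$. The remaining and hardest step is normalization: I must show that, modulo multiplication by $(F^{\times})^2$ and rational squares, $\alpha$ can be brought to exactly $A(D+B\sqrt{D})$ with $A$ odd, square-free and coprime to $D$. I expect this to be the main obstacle, since it requires using the structure of representations of $D$ as a sum of two squares (equivalently factorization in $\mathbb{Z}[i]$) to align the coefficient $b$ with the chosen representation, together with careful control of the prime $2$ and of shared factors in order to force $A$ odd and $(A,D)=1$.
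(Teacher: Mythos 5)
First, a point of comparison: the paper contains no proof of this statement at all --- Theorem 0.1 is quoted from reference [1] (Hardy--Hudson--Richman--Williams--Holtz) --- so your proposal must stand on its own merits. Much of it does. The reduction of cyclicity to the norm criterion $N_{F/\mathbb{Q}}(\alpha)\in D\,(\mathbb{Q}^{\times})^{2}$ is correct (Kummer theory plus $\sigma^{2}(\sqrt{\alpha})=N_{F/\mathbb{Q}}(\beta)\sqrt{\alpha}$), and the backward direction is complete, including the equality of the two presentations via $\sqrt{A(D+B\sqrt{D})}\cdot\sqrt{A(D-B\sqrt{D})}=\pm AC\sqrt{D}$.

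The genuine gap is precisely the step you flag as ``the main obstacle'' and then do not carry out: the normalization in the forward direction. At the point where you stop, you have only shown that $D$ is a sum of two integer squares; you have not shown that the coset $\alpha(F^{\times})^{2}$ contains an element $A(D+B\sqrt{D})$ with $A$ odd, square-free and $(A,D)=1$, and this is not a routine cleanup. Indeed, using the fact that any $\gamma\in F^{\times}$ with $N_{F/\mathbb{Q}}(\gamma)\in(\mathbb{Q}^{\times})^{2}$ lies in $\mathbb{Q}^{\times}(F^{\times})^{2}$ (e.g.\ $\gamma(2x+2w)=(\gamma+w)^{2}$ for $\gamma=x+y\sqrt{D}$, $N(\gamma)=w^{2}$), one easily gets that for \emph{every} representation $D=B^{2}+C^{2}$ there is \emph{some} square-free $A$ with $K=F(\sqrt{A(D+B\sqrt{D})})$. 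But for a fixed representation $(B,C)$ the admissible square classes of $A$ form exactly a pair $\{A_{0},\,\mathrm{sqfree}(A_{0}D)\}$, because $A(D+B\sqrt{D})$ and $A'(D+B\sqrt{D})$ generate the same extension of $F$ if and only if $AA'\in(\mathbb{Q}^{\times})^{2}\cup D(\mathbb{Q}^{\times})^{2}$; the swap $A_{0}\mapsto \mathrm{sqfree}(A_{0}D)$ replaces $d=(A_{0},D)$ by $D/d$, and when $D$ is odd it never changes the parity of $A$. So oddness and coprimality cannot in general be reached without changing the representation $(B,C)$ itself. A concrete instance: for $K=\mathbb{Q}(\zeta_{5})$ one has $D=5$ and $(\zeta_{5}-\zeta_{5}^{-1})^{2}=\frac{-5-\sqrt{5}}{2}$, so $K=\mathbb{Q}(\sqrt{-2(5+\sqrt{5})})$; with the representation $B=1$, $C=2$ the only admissible classes are $A\in\{-2,-10\}$, both even, and only after switching to $B=2$, $C=1$, via the identity $-2(5+\sqrt{5})\bigl(\tfrac{1+\sqrt{5}}{4}\bigr)^{2}=-(5+2\sqrt{5})$, does one reach the admissible form $A=-1$. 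Proving that a good representation always exists is a genuine argument in $\mathbb{Z}[i]$ (tracking how $A$ transforms as $(B,C)$ runs over the representations of $D$, with separate care at the prime $2$ and at primes dividing $D$); that argument is the actual content of the forward direction, and your proposal stops exactly where it begins.
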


\begin{theorem}
    The conductor of $K$ is $2^l|A|D$, where
    $$l=\left\{
        \begin{array}{lll}
            3 &   & if\ D\equiv2\pmod8, or\ D\equiv1\pmod4\ with\ B\equiv1\pmod2 \\ 
            2 &   & if\ D\equiv1\pmod4, \ B\equiv0\pmod2,\ A+B\equiv3\pmod4 \\ 
            0 &   & if\ D\equiv1\pmod4, \ B\equiv0\pmod2,\ A+B\equiv1\pmod4. 
        \end{array}
    \right.    
    $$
\end{theorem}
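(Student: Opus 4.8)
The strategy is to translate the problem into the language of Dirichlet characters and then compute a conductor prime by prime, the genuine difficulty lying at $p=2$. Because $K/\mathbb{Q}$ is abelian, Kronecker--Weber identifies $K$ with a group $X$ of Dirichlet characters, and $X\cong\mathrm{Gal}(K/\mathbb{Q})\cong\mathbb{Z}/4\mathbb{Z}$; write $X=\langle\psi\rangle$ with $\psi$ of order $4$. By Theorem 1 the element $\alpha=\sqrt{A(D+B\sqrt D)}$ satisfies $\alpha^2=AD+AB\sqrt D$, so $\sqrt D\in K$ and the unique quadratic subfield of $K$ is $\mathbb{Q}(\sqrt D)$; hence $\psi^2$ is its quadratic character, with $\mathfrak{f}(\psi^2)=|d_{\mathbb{Q}(\sqrt D)}|$. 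The conductor of $K$ is $\mathrm{lcm}_{\chi\in X}\mathfrak{f}(\chi)$, and since $\mathfrak{f}(\psi^2)\mid\mathfrak{f}(\psi)$ and $\mathfrak{f}(\psi)=\mathfrak{f}(\psi^3)$, this reduces to $\mathfrak{f}(\psi)$. Thus it suffices to compute $\mathfrak{f}(\psi)=\prod_p p^{c_p}$, where $c_p$ is the conductor exponent of the local component $\psi_p$ of $\psi$ on $\mathbb{Z}_p^{\times}$.

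First I would treat the odd primes. For odd $p$ the group $1+p\mathbb{Z}_p$ is pro-$p$ and admits no nontrivial character of $2$-power order, so $\psi_p$ factors through $(\mathbb{Z}/p)^{\times}$ and $c_p\in\{0,1\}$, with $c_p=1$ exactly when $p$ ramifies in $K$. Odd primes dividing $D$ already ramify in $\mathbb{Q}(\sqrt D)$. For an odd prime $p\mid A$ I would use $N_{\mathbb{Q}(\sqrt D)/\mathbb{Q}}(A(D+B\sqrt D))=A^2D(D-B^2)=A^2C^2D$ together with the splitting behaviour of $p$ in $\mathbb{Q}(\sqrt D)$ to show that every prime above $p$ divides $(A(D+B\sqrt D))$ to odd order, so $p$ ramifies in $K/\mathbb{Q}(\sqrt D)$ and hence in $K$; conversely, for $p\nmid 2AD$ the same valuation count (using that $D=B^2+C^2$ squarefree forces $(B,C)=1$) gives even order and no ramification. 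This pins the odd part of $\mathfrak{f}(\psi)$ down to $|A|$ times the odd part of $D$, each prime occurring to the first power.

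The substance of the theorem is the exponent at $2$, where $\psi_2$ may be wildly ramified. Here I would pass to $\mathbb{Q}_2$ and analyse $\mathbb{Q}_2(\sqrt{A(D+B\sqrt D)})$ case by case according to $D\bmod 8$. Note first that if $D$ is even then $B,C$ are both odd, whence $D=B^2+C^2\equiv 2\pmod 8$; so the only residues that occur are $D\equiv 1,5\pmod 8$ (with $D\equiv1\pmod4$) and $D\equiv2\pmod8$. When $D\equiv1\pmod8$ we have $\sqrt D\in\mathbb{Q}_2$ and the local field degenerates to a quadratic extension; when $D\equiv5\pmod8$ the subfield $\mathbb{Q}_2(\sqrt D)$ is unramified; when $D\equiv2\pmod8$ it is ramified. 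In each case one decides whether $A(D+B\sqrt D)$ is a $2$-adic square, a nonsquare unit, or of odd valuation, reading off $c_2$ and the order of $\psi_2$. Chasing the $2$-adic units through these congruences is what produces the trichotomy for $l$ governed by the parity of $B$ and by $A+B\bmod4$; I expect the delicate point to be the subcase $D\equiv1\pmod4$ with $B$ even, where the finer invariant $A+B\bmod4$ alone separates the unramified case $l=0$ from $l=2$. This $2$-adic bookkeeping, intertwining the wild ramification of $2$ with the arithmetic of $\mathbb{Q}(\sqrt D)$, is the main obstacle.

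Finally I would confirm the result against the conductor--discriminant formula $|d_K|=\mathfrak{f}(\psi)^2\,|d_{\mathbb{Q}(\sqrt D)}|$, with $|d_{\mathbb{Q}(\sqrt D)}|=D$ or $4D$ according as $D\equiv1$ or $2\pmod4$. Solving for $\mathfrak{f}(\psi)$ should return $2^l|A|D$; in particular, in the even case the factor from $v_2(D)$ is absorbed so that the $2$-adic exponent $c_2=4$ is displayed as $2^3\cdot D$ with $D\equiv2\pmod8$, matching $l=3$.
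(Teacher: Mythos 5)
The paper offers no proof of this statement to compare against: Theorem 0.2 is quoted from reference [1] (Hardy--Hudson--Richman--Williams--Holtz), so your proposal can only be judged on its own merits. Your framework is the standard and correct one, and your treatment of the odd primes is essentially complete: the conductor of $K$ equals $\mathfrak{f}(\psi)$ for a generator $\psi$ of the character group, $c_p\le 1$ for odd $p$ because $1+p\mathbb{Z}_p$ is pro-$p$, and the valuation count based on $N_{k/\mathbb{Q}}(A(D+B\sqrt D))=A^2C^2D$ together with $(B,C)=1$ (forced by $D=B^2+C^2$ squarefree) does show that the ramified odd primes are exactly those dividing $AD$. One point you leave implicit but genuinely need: if $p\mid C$ then $p\nmid B$, so $D\equiv B^2\pmod p$ and $p$ splits in $k$; this is what excludes the scenario ``$p\mid A$, $p$ inert in $k$, $v_p(C)$ odd,'' in which the valuation of $A(D+B\sqrt D)$ at the prime above $p$ would be even and your ramification claim would fail.

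The genuine gap is the dyadic exponent, which is the entire content of the theorem. You lay out the case division by $D\bmod 8$ and then state that ``chasing the $2$-adic units through these congruences'' yields the trichotomy $l\in\{3,2,0\}$, calling it ``the main obstacle'' --- but you never carry out this computation, and it is exactly where the values $l=3,2,0$ (and implicitly $c_2=4$ when $D\equiv2\pmod 8$, and the role of $A+B\bmod 4$) must come from. Worse, the recipe you propose --- decide whether $A(D+B\sqrt D)$ is a $2$-adic square, a nonsquare unit, or of odd valuation, and ``read off'' $c_2$ --- is only adequate when $D\equiv1\pmod 8$, where $\sqrt D\in\mathbb{Q}_2$ and the local character is at most quadratic. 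When $D\equiv5\pmod 8$ or $D\equiv2\pmod 8$ the decomposition group at $2$ is all of $\mathbb{Z}/4\mathbb{Z}$, so $\psi_2$ has order $4$ and is wildly ramified; the square class of $A(D+B\sqrt D)$ in $k_v^\times$ only decides whether $K_w/k_v$ ramifies, and does not distinguish $c_2=2$ from $c_2=3$ from $c_2=4$. For that you must either evaluate $\psi_2$ on the filtration $1+2^n\mathbb{Z}_2$ or compute the local quartic discriminant through the tower $K_w/k_v/\mathbb{Q}_2$ and invert the local conductor--discriminant identity $v_2(d(K_w/\mathbb{Q}_2))=2c(\psi_2)+c(\psi_2^2)$; neither is set up in your sketch. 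Finally, the proposed confirmation via $|d_K|=\mathfrak{f}(\psi)^2|d_k|$ is circular in this context: the discriminant formula (Theorem 0.3) is itself imported from [1] and is ordinarily derived from the conductor, so it is not available as independent corroboration.
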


Denote $k=\mathbb{Q}(\sqrt{D})$ the unique quadratic subfield of $K$.

\begin{theorem}
    Let $\Delta$ denote the (relative) discriminant,  then
    $$\Delta_{K/k}=\left\{
        \begin{array}{lll}
            4A\sqrt{D} &   & if\ B\equiv0\pmod2\ ,\ A+B\equiv3\pmod4 \\ 
            A\sqrt{D} &   & if\ D\equiv1\pmod4, \ B\equiv0\pmod2,\ A+B\equiv1\pmod4 \\ 
            8A\sqrt{D} &   & if\ D\equiv1\pmod4, \ B\equiv1\pmod2. 
        \end{array}
    \right.    
    $$
    $$\Delta_{K}=\left\{
        \begin{array}{lll}
            2^8A^2D^3 &   & if\ D\equiv2\pmod8 \\
            2^4A^2D^3 &   & if\ D\equiv1\pmod4, \ B\equiv0\pmod2,\ A+B\equiv3\pmod4 \\ 
            A^2D^3 &   & if\ D\equiv1\pmod4, \ B\equiv0\pmod2,\ A+B\equiv1\pmod4 \\ 
            2^6A^2D^3 &   & if\ D\equiv1\pmod4, \ B\equiv1\pmod2. 
        \end{array}
    \right.    
    $$
\end{theorem}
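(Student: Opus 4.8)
The plan is to read off $\Delta_K$ from the conductor--discriminant formula and then recover $\Delta_{K/k}$ from the tower (multiplicativity) formula reinforced by a direct local computation, the primes above $2$ being the only delicate point. First I would apply the conductor--discriminant formula to the abelian extension $K/\mathbb Q$. Writing $G=\mathrm{Gal}(K/\mathbb Q)\cong\mathbb Z/4\mathbb Z$, the four characters contribute as follows: the trivial character gives $1$; the order-two character $\chi^2$ cuts out $k=\mathbb Q(\sqrt D)$ and so has conductor $|d_k|$; and the two faithful quartic characters $\chi,\chi^3$ each have conductor $\mathfrak f_K=2^l|A|D$ (Theorem~0.2). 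Hence $|\Delta_K|=\mathfrak f_K^{\,2}\,|d_k|=2^{2l}A^2D^2\,|d_k|$. Because $D=B^2+C^2$ is square-free, a parity check forces $D\equiv1\pmod4$ when $B$ is even and $D\equiv2\pmod8$ when $B$ (equivalently $C$) is odd and $D$ is even; substituting $d_k=D$ for $D\equiv1\pmod4$ and $d_k=4D$ for $D\equiv2\pmod4$, together with the three values of $l$, reproduces the four cases of $\Delta_K$ at once.

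For the relative discriminant I would take the Kummer generator $\alpha=A(D+B\sqrt D)=A\sqrt D\,(B+\sqrt D)\in\mathcal O_k$, so that $K=k(\sqrt\alpha)$. At the odd primes of $k$ the ramification depends only on the square-free part of $\alpha$. Since $N_{k/\mathbb Q}(B+\sqrt D)=B^2-D=-C^2$ and the conjugate factors $B+\sqrt D,\,B-\sqrt D$ are coprime away from $2D$, the factor $B+\sqrt D$ has even valuation at every odd prime; as $(A,D)=1$, the odd square-free part of $\alpha$ is then exactly the ideal $(A\sqrt D)$. This already shows that $A\sqrt D$ is the odd part of $\Delta_{K/k}$ in every case, and the tower formula $|\Delta_K|=N_{k/\mathbb Q}(\mathfrak d_{K/k})\,|d_k|^2$ determines the norm of the full relative discriminant, hence the precise power of $2$ to be attached to $A\sqrt D$, giving $A\sqrt D$, $4A\sqrt D$, or $8A\sqrt D$ according to the case.

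The hard part will be establishing the dyadic contribution intrinsically rather than merely back-solving it from the tower formula. One must pin down the exponent of the prime(s) above $2$ in the relative different of $K/k$, which amounts to testing whether $\alpha$ is a square modulo successive powers of the dyadic prime---the relative analogue of the classical $d\equiv1\pmod4$ criterion---while accounting for the three decomposition types of $2$ in $k$ (namely $2$ ramifies when $D\equiv2\pmod8$, splits when $D\equiv1\pmod8$, and is inert when $D\equiv5\pmod8$). The congruences on $A+B\pmod4$ and on the parity of $B$ recorded in the statement are precisely the conditions under which $\alpha$, after adjustment by a square from $\mathcal O_k$, becomes a dyadic square and the $2$-exponent drops; so the genuine labor is a completion/Hensel argument at each prime over $2$, whose outcome I would cross-check for consistency against the conductor--discriminant computation of $\Delta_K$ obtained in the first step.
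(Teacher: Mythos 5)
The paper never proves this theorem: it is quoted as background from reference [1] (Hardy--Hudson--Richman--Williams--Holtz), so there is no internal proof to compare against, and your proposal has to stand on its own. On its own merits it is sound and essentially complete. The conductor--discriminant step is correct: the two faithful characters of $\mathrm{Gal}(K/\mathbb{Q})$ have conductor $2^l|A|D$ (the conductor of $\chi^2$ divides that of $\chi$, so the field conductor of Theorem 0.2 is the quartic characters' conductor), the quadratic character has conductor $|d_k|$, and the parity facts you invoke ($B$ even forces $D\equiv 1\pmod 4$; $D$ even forces $B,C$ odd, hence $D\equiv 2\pmod 8$) give $\Delta_K=2^{2l}A^2D^2|d_k|$, which reproduces all four cases (signs are harmless since a cyclic quartic field is totally real or CM, so $\Delta_K>0$). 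The identification of the odd part of $\Delta_{K/k}$ via $\alpha=A\sqrt{D}(B+\sqrt{D})$ and $N_{k/\mathbb{Q}}(B+\sqrt{D})=-C^2$ is also correct; the coprimality of $B+\sqrt{D}$ and $B-\sqrt{D}$ at odd primes rests on $\gcd(B,D)=1$, which follows from square-freeness of $D=B^2+C^2$, and tameness gives exponent exactly $1$ at each odd ramified prime. The one genuine (but small) gap is in the back-solving step: when $D\equiv 1\pmod 8$ the prime $2$ splits in $k$, say $(2)=\mathfrak{p}_1\mathfrak{p}_2$, and the tower formula only gives the norm $2^{a+b}$ of the dyadic part $\mathfrak{p}_1^a\mathfrak{p}_2^b$ of $\mathfrak{d}_{K/k}$, not $a$ and $b$ separately; to conclude $a=b$, i.e.\ that the dyadic part is a rational power of $2$ as the theorem asserts, you must add that $\mathfrak{d}_{K/k}$ is stable under $\mathrm{Gal}(k/\mathbb{Q})$ because $K/\mathbb{Q}$ is Galois. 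With that one line the tower-formula argument is fully rigorous, and the intrinsic Hensel-type dyadic computation you sketch in the last paragraph becomes an optional cross-check rather than a necessity. A bonus of your derivation: it also settles the case $D\equiv 2\pmod 8$, which the quoted list for $\Delta_{K/k}$ omits (one gets $\mathfrak{d}_{K/k}=(4A\sqrt{D})$ there as well), so your self-contained route actually yields slightly more than the statement being reviewed.
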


[2, 66:1] tells the isotropy of regular quadratic forms satisfies local-global principle:

\begin{theorem}
    A regular quadratic space over a global field is isotropic if and only if it is isotropic at all spots.
\end{theorem}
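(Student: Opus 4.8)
The plan is to prove the substantive implication — that isotropy at every spot forces isotropy over the global field $F$ — by induction on the dimension $n$ of the quadratic space $V$, after fixing a diagonalization $V \cong \langle a_1,\dots,a_n\rangle$. The converse implication needs no work: a nonzero vector $x$ with $Q(x)=0$ over $F$ remains nonzero and isotropic under each embedding $F \hookrightarrow F_v$, so global isotropy automatically descends to every completion. Thus everything reduces to the local-to-global direction, and the real question is how much arithmetic must be imported to patch the local isotropic vectors into a single global one.

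For the base of the induction, $n=1$ is vacuous, since $\langle a_1\rangle$ with $a_1\neq 0$ is anisotropic over $F$ and over each $F_v$ alike. The case $n=2$ I would reduce to the fact that $\langle a,b\rangle$ is isotropic exactly when $-ab$ is a square, and then invoke the square theorem for global fields: an element of $F^*$ that is a square in every completion is already a square in $F$. This is the first point where genuine arithmetic enters, because its proof relies on the fact that a nontrivial quadratic extension $F(\sqrt d)/F$ stays non-split at a positive density of spots (Chebotarev, or Dirichlet's theorem when $F=\mathbb{Q}$).

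The heart of the matter is the ternary case $n=3$, say $V\cong\langle a,b,c\rangle$. Here I would translate isotropy into Hilbert symbols: over any field containing $a,b,c$ the form $\langle a,b,c\rangle$ is isotropic iff $(-ab,-ac)=1$, so the hypothesis reads $(-ab,-ac)_v=1$ for every spot $v$. Equivalently, $-ab$ is a local norm from $F_v(\sqrt{-ac})$ at every $v$, while global isotropy amounts to $-ab$ being a global norm from $F(\sqrt{-ac})$. The bridge between these two statements is precisely the Hasse norm theorem for quadratic extensions, which is itself a theorem of class field theory, provable from the Hilbert reciprocity law $\prod_v(\alpha,\beta)_v=1$ together with the existence theorem (Dirichlet's theorem on primes in arithmetic progressions in the case $F=\mathbb{Q}$). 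I anticipate this step to be the main obstacle: it is exactly where the local data must be shown to be globally consistent, and where the only genuinely deep arithmetic is consumed — once it is in hand, the remaining cases are soft.

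Finally, for $n=4$ I would split $V\cong\langle a_1,a_2\rangle\perp\langle a_3,a_4\rangle$ and look for a nonzero $t\in F^*$ represented by the first binary form with $-t$ represented by the second; the local hypothesis furnishes such a $t$ at each spot, and weak approximation together with the already-settled ternary case let me assemble these into a global $t$, making $V$ isotropic. For $n\geq 5$ the reduction is gentler, since over every non-archimedean completion a form of dimension at least $5$ is automatically isotropic, so only the finitely many real spots impose conditions; I would peel off a binary subform, choose a globally represented common value whose sign is controlled at the real places and which is a unit outside a fixed finite set (so that the complementary form of dimension at least $3$ stays isotropic there), and then invoke the inductive hypothesis. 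The bookkeeping in this last step is routine given the approximation and reciprocity machinery, and the function-field case runs in parallel, with the archimedean constraints simply absent.
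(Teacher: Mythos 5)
The paper itself contains no proof of this statement: it is the Hasse--Minkowski theorem, quoted from O'Meara [2, 66:1], so your outline can only be measured against the standard argument of that reference. Your plan (trivial converse; induction on dimension; the global square theorem for $n=2$; Hilbert symbols and the quadratic Hasse norm theorem for $n=3$; splitting off binary forms for $n\geq4$) is exactly that classical proof, and your identification of the ternary case as the step that consumes the deep arithmetic is accurate.

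There is, however, a genuine gap at $n=4$: weak approximation together with the settled ternary case does \emph{not} suffice to produce the global common value $t$. Weak approximation matches $t$ with the local values $t_v$ only at a finite set $S$ of spots; but to apply the ternary case to $\langle a_1,a_2,-t\rangle$ and $\langle a_3,a_4,t\rangle$ you need these forms isotropic at \emph{every} spot, and at a non-dyadic spot $v\notin S$ where $v(t)$ happens to be odd and $-a_1a_2$ is a non-square unit, the binary form $\langle a_1,a_2\rangle$ represents only elements of even valuation (norms from the unramified quadratic extension), so $\langle a_1,a_2,-t\rangle$ is anisotropic at $v$. Nothing in weak approximation controls the valuations of $t$ outside $S$. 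This is precisely why the classical treatments insert an extra global input at this step: either the existence theorem for elements with prescribed Hilbert symbols at all spots (resting on Dirichlet's theorem on primes in arithmetic progressions --- Serre's route), or O'Meara's device of passing to $E=F(\sqrt{d})$, $d$ the discriminant, and applying the norm theorem over $E$. Note the contrast with your $n\geq5$ step, which is sound but for a reason you slightly misstate: there the complementary form $g$ has rank at least $3$ and unit coefficients outside $S$, hence is isotropic at every good spot \emph{regardless} of $t$, so no condition on $t$ outside $S$ is needed at all --- in particular your requirement that $t$ be ``a unit outside a fixed finite set'' is both unachievable by weak approximation and unnecessary; that luxury is exactly what disappears when the complement is binary, which is why $n=4$ cannot be handled the same way.
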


Thus, a non-zero $m\in K$ is a sum of squares, if and only if $x^2+y^2-mz^2=0$ is isotropic at every spot of $K$, since $\sqrt{-1}\notin
K$ implies the solution has $z$-component not 0.

For convenience, we denote $\sigma$ the generator of $Gal(K/\mathbb{Q})$ satisfying
$$\sqrt{A(D+B\sqrt{D})}\stackrel{\sigma}{\longrightarrow}\sqrt{A(D-B\sqrt{D})}\stackrel{\sigma}{\longrightarrow}-\sqrt{A(D+B\sqrt{D})}\stackrel{\sigma}{\longrightarrow}-\sqrt{A(D-B\sqrt{D})},$$
$$\sqrt{D}\stackrel{\sigma}{\longrightarrow}-\sqrt{D}.$$

\section{Prime numbers}
First we introduce some notations. Let $p$ be a prime number. 

If $pO_k$ ramifies into $\mathfrak{p}^2$ and $\mathfrak{p}O_K$ ramifies into
$\mathcal{P}^2$, we call the field $K$ is of type RR($p$), and also denote RR($p$) the set consisting of all RR($p$)-type fields.

If $pO_k$ splits into $\mathfrak{p}_1\mathfrak{p}_2$ and $\mathfrak{p}_1O_K,\mathfrak{p}_2O_K$ inert(resp. split into $\mathcal{P}_{11}\mathcal{P}_{12}$ and $\mathcal{P}_{21}\mathcal{P}_{22}$, and 
ramify into $\mathcal{P}_1^2$ and $\mathcal{P}_2^2$), 
we call the field $K$ is of type SI($p$)(resp. SS($p$) and SR($p$)), and also denote SI($p$)(resp. SS($p$) and SR($p$)) the set consisting of all SI($p$)(resp. SS($p$) and SR($p$))-type fields.

If $pO_k$ inerts and $pO_K$ inerts(resp. ramifies into $\mathcal{P}^2$), we call the field $K$ is of type II($p$)(resp. IR($p$)), and also denote II($p$)(resp. IR($p$)) the set consisting of all II($p$)(resp. IR($p$))-type fields.

For every field $K$ and every prime $p$, $K$ must be of one of above six types.

Let us first determine if the prime number $p$ is the sum of two squares in $K$. 
If $p\equiv1,2\pmod4$, then $p$ is always the sum of two squares in $\mathbb{Q}$. Assume $p\equiv3\pmod4$.
By [2, 63:12], we just need to consider dyadic and $p$-adic primes. First we can obtain two lemmas about quadratic extensions on $\mathbb{Q}_2$ by direct calculation:

\begin{lemma}
    Let $\mathbb{Q}_2(\sqrt{c})$ be a quadratic extension on $\mathbb{Q}_2$, $c\in\{2,3,7,10,11,14,5\}$, then $t\in \mathbb{Q}_2(\sqrt{c})$ is 
    a square, if and only if

    ($c=2$)$t=2^mr$, $m\geq0$ an integer, $\sqrt{2}\nmid r$, $r\equiv 1, 3+2\sqrt{2}\pmod{4\sqrt{2}}$.

    ($c=3$)$t=(\sqrt{3}-1)^{2m}r$, $m\geq0$ an integer, $(\sqrt{3}-1)\nmid r$, $r\equiv 1, 3\pmod{4(\sqrt{3}-1)}$.

    ($c=7$)$t=(3-\sqrt{7})^{2m}r$, $m\geq0$ an integer, $(3-\sqrt{7})\nmid r$, $r\equiv \pm1\pmod{4(3-\sqrt{7})}$.

    ($c=11$)$t=(\sqrt{11}-3)^{2m}r$, $m\geq0$ an integer, $(\sqrt{11}-3)\nmid r$, $r\equiv 1,3\pmod{4(\sqrt{11}-3)}$.

    ($c=14$)$t=(4-\sqrt{14})^{2m}r$, $m\geq0$ an integer, $(4-\sqrt{14})\nmid r$, $r\equiv \pm1\pmod{4(4-\sqrt{14})}$.

    ($c=10$)$t=Mr$, $v_{(2,\sqrt{10})}(M)$ is even, $v_{(2,\sqrt{10})}(r)=0$, $r\equiv 1, 3+2\sqrt{10}\pmod{4(2,\sqrt{10})}$.

    ($c=5$)$t=4^{m}r$, $m\geq0$ an integer, $2\nmid r$, $r\equiv 1,\frac{3\pm\sqrt{5}}{2}\pmod4$.
\end{lemma}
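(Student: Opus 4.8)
The plan is to handle all seven extensions $L=\mathbb{Q}_2(\sqrt c)$ uniformly as $2$-adic fields, reducing squareness of an arbitrary $t$ to a valuation-parity condition together with a congruence on the unit part, and then to settle the unit part by a sharp form of Hensel's lemma. Writing $O_L$ for the valuation ring, $U$ for its units, $\pi$ for a uniformizer and $v$ for the normalized valuation ($v(\pi)=1$), I would first record for each $c$ whether $L/\mathbb{Q}_2$ is ramified. For $c\in\{2,3,7,10,11,14\}$ the extension is ramified, so the residue field is $\mathbb{F}_2$ and $v(2)=2$; here $\pi$ is the indicated generator of the dyadic prime ($\sqrt2$, $\sqrt3-1$, $3-\sqrt7$, the prime $(2,\sqrt{10})$, $\sqrt{11}-3$, $4-\sqrt{14}$ respectively, each of norm $2$). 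For $c=5$ the extension is the unramified one, with residue field $\mathbb{F}_4$, uniformizer $\pi=2$ and $v(2)=1$. In every case $t=\pi^{v(t)}r$ with $r\in U$, and $t$ is a square iff $v(t)$ is even and $r$ is a square unit; this already accounts for the factors $\pi^{2m}$, $4^m$ and the parity condition $v_{(2,\sqrt{10})}(M)$ even appearing in the statement.

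The heart of the matter is the description of the square units, which I would extract from the estimate for $x^2=u$: since $f'(x)=2x$ has valuation $v(2)$, any unit with $v(u-1)>2v(2)$ is automatically a square. In the six ramified cases $v(2)=2$, so $1+\pi^5O_L=1+4\pi O_L\subseteq(L^\times)^2$, and squareness of a unit is therefore determined by its class modulo $4\pi$. Using the standard count $|L^\times/(L^\times)^2|=2^{[L:\mathbb{Q}_2]+2}=16$, hence $|U/U^2|=8$, together with the fact that the residue field $\mathbb{F}_2$ contributes nothing (its unit group is trivial), I get that exactly two residues modulo $4\pi$ represent square units. A short explicit computation in the finite ring $O_L/4\pi O_L$ then identifies these two residues and checks that the remaining ones are non-squares; this is the content of each of the six ramified lines, and the prime $(2,\sqrt{10})$ is handled identically once one works with its valuation $v_{(2,\sqrt{10})}$ in place of $v$.

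The delicate case, and the one I expect to be the main obstacle, is the unramified field $c=5$. Here $v(2)=1$ forces the Hensel threshold up to $v(u-1)>2$, so a priori only $1+8\,O_L\subseteq(L^\times)^2$ is free, and the residue field $\mathbb{F}_4$ changes the bookkeeping: since $\mathbb{F}_4^\times$ has odd order $3$, every unit is already a square modulo $\pi$, so all the information sits in the principal units $U_1=1+2O_L$. The real work is to analyze $U_1/U_1^2$ through the Artin--Schreier map $x\mapsto x^2+x$ on $\mathbb{F}_4$, whose image is exactly the trace-zero line $\mathbb{F}_2\subset\mathbb{F}_4$; this is what decides which principal units are squares. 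The three listed residues $1,\tfrac{3+\sqrt5}{2},\tfrac{3-\sqrt5}{2}$ (the reductions of $1,1+\omega,2-\omega$ with $\omega=\tfrac{1+\sqrt5}{2}$) arise as the values taken by $r\mapsto r^2$ modulo $4$, so the necessity of the congruence is immediate; pinning down exactly which units are squares, and to what modulus the condition must really be read, is the crux. I expect this $\mathbb{F}_4$-computation, rather than any conceptual difficulty, to be where the effort concentrates; once it is settled, the ramified cases follow by the comparatively mechanical finite checks described above.
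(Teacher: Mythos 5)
Your skeleton --- split off the valuation parity, reduce squareness of a unit to its class modulo $4\pi$ via the Hensel threshold $v(u-1)>2v(2)$, then identify the square classes by a finite computation --- is exactly the paper's method: for $c=2$ the paper computes $q^2\equiv 1,\,3+2\sqrt{2}\pmod{4\sqrt{2}}$ from the two unit classes mod $2$ and runs Hensel on $f(x)=x^2-r$ with $v_{\mathfrak{p}}(f(q))\ge 5>4=2v_{\mathfrak{p}}(f'(q))$. Your index count $|L^\times/(L^\times)^2|=2^{[L:\mathbb{Q}_2]+2}=16$, hence $[U:U^2]=8$ and \emph{exactly two} square classes modulo $4\pi$ in the ramified cases, is a genuine refinement that shortens the verification. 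The gap is that you never actually perform the verifications: every line of the lemma is deferred to a ``short explicit computation,'' and the $c=5$ case, which you yourself call the crux, is left unresolved. As written this is a plan, not a proof.

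However, your hesitation over $c=5$ --- ``to what modulus the condition must really be read'' --- is better founded than you may realize: the lemma is false there, and your plan, carried out, would correct it rather than prove it. In the unramified case square units are unions of classes modulo $8$, not modulo $4$: from $1+8O_L\subseteq (L^\times)^2$ and $[U:U^2]=8$ there are exactly $48/8=6$ square classes mod $8$, whereas the stated mod-$4$ condition cuts out $3\cdot4=12$ such classes. Concretely, with $\omega=\frac{1+\sqrt5}{2}$, the unit $3+2\sqrt{5}=1+4\omega\equiv 1\pmod 4$ satisfies the stated condition but is not a square, since a square's norm is $N(s)^2\in\mathbb{Q}_2^{\times2}$ while $N(3+2\sqrt5)=-11\equiv 5\pmod 8$ is not a square in $\mathbb{Q}_2$. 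Completing your deferred ramified computations would likewise expose the $c=14$ entry: $(5-\sqrt{14})^2=39-10\sqrt{14}\equiv 7-2\sqrt{14}\pmod{4(4-\sqrt{14})}$, and $-1$ is \emph{not} a square in $\mathbb{Q}_2(\sqrt{14})$ (neither $-1$ nor $-14$ lies in $\mathbb{Q}_2^{\times2}$), so the two square classes are $1$ and $7-2\sqrt{14}$, not $\pm1$; by contrast $\pm1$ is correct for $c=7$ because $-7\in\mathbb{Q}_2^{\times2}$ puts $\sqrt{-1}$ in $\mathbb{Q}_2(\sqrt7)$, and the entries for $c=2,3,10,11$ do check out. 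The paper's own proof, which treats $c=2$ and declares the remaining six cases ``similar,'' never confronts either point --- the unramified case is not similar --- so finish your computations; they are where the real content, and the corrections, lie.
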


\begin{proof}
    We take $c=2$ as example. Let $\mathfrak{p}=(\sqrt{2})$ the prime ideal of 
    $\mathbb{Q}_2(\sqrt{2})$, and $q$ a unit in $O_{\mathbb{Q}_2(\sqrt{2})}$.
    We have $q\equiv1\pmod\sqrt{2}$, since $O_{\mathbb{Q}_2(\sqrt{2})}/\mathfrak{p}\simeq \mathbb{Z}/2\mathbb{Z}$.
    So $q\equiv1,1+\sqrt{2}\pmod{2}$, implying $q^2\equiv1,3+2\sqrt{2}\pmod{4\sqrt{2}}$
    because $(q+2)^2-q^2=4(q+1)\equiv0\pmod8$.
    Conversely, if $r=1,3+2\sqrt{2}$, then $q^2\equiv r\pmod{4\sqrt{2}}$ is solvable in $O_{\mathbb{Q}_2(\sqrt{2})}$..
    Let $f(x)=x^2-r$, then $f'(x)=2x$. Since $q^2\equiv r\pmod{4\sqrt{2}}$, we have $v_\mathfrak{p}(f(q))\geq5$
    and $v_\mathfrak{p}(f'(q))=2$. By Hensel's Lemma, $f(x)$ has a zero in $O_{\mathbb{Q}_2(\sqrt{2})}$.

    The proof in other 6 situations are similar.
\end{proof}

\begin{lemma}
    Take assumptions as the last lemma and let $h$ be a unit in $O_{\mathbb{Q}_2(\sqrt{c})}$, then $h$ is the sum of two squares if and only if 
    
    ($c\neq 5$)$2|h-1$.

    ($c=5$)$h\equiv1,3,\frac{\pm3\pm\sqrt{5}}{2}\pmod4$.
\end{lemma}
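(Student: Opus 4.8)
The plan is to reduce the two-square question to the square-recognition already solved in the previous lemma, and then to read off the answer from the local structure of $F:=\mathbb{Q}_2(\sqrt c)$. A unit $h$ is a sum of two squares in $F$ exactly when the ternary form $\langle 1,1,-h\rangle$ is isotropic, i.e.\ when $h$ is a value of $\langle 1,1\rangle$, i.e.\ when $h$ is a norm from $F(\sqrt{-1})$ since $x^2+y^2=N_{F(\sqrt{-1})/F}(x+y\sqrt{-1})$; equivalently the Hilbert symbol $(-1,h)_F$ equals $1$. It is convenient to record the elementary reformulation that drives the computation: $h$ is a sum of two squares if and only if there is a square $s\in(F^\times)^2$ with $hs-1$ a square or $0$ (if $y\neq 0$ take $s=y^{-2}$, so $hs-1=(x/y)^2$, and conversely $ha^2=1+b^2$ gives $h=(1/a)^2+(b/a)^2$; the case $y=0$ uses $s=h^{-1}$). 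Because the property depends only on the class of $h$ in the finite group $F^\times/(F^\times)^2$, everything becomes a finite verification once the squares are described as in the previous lemma.

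Next I would split on whether $-1$ is a square in $F$. By the square description of the previous lemma this happens precisely for $c=7$ (there $-1\equiv -1$ is among the allowed residues, whereas for every other listed $c$ the value $-1$ fails the stated congruences); equivalently $\mathbb{Q}_2(\sqrt 7)=\mathbb{Q}_2(\sqrt{-1})$. When $-1$ is not a square, $(-1,\cdot)_F$ is a nontrivial quadratic character, so the sums of two squares form a subgroup of index $2$ in $F^\times$, and the task is simply to identify its unit part explicitly and match it against the stated congruence, using the criterion of the first paragraph together with the square description.

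For the ramified fields $c\in\{2,3,10,11,14\}$ (residue field $\mathbb{F}_2$, with $(2)=\mathfrak{p}^2$) the computation should show that the unit norms are exactly $1+(2)=1+\mathfrak{p}^2$, a subgroup of index $2$ in $O_F^\times$ because $(O_F/\mathfrak{p}^2)^\times$ has order $2$; this is precisely the condition $2\mid h-1$. Writing $h=x^2(1+w^2)$ with $w=y/x$ reduces the question to deciding when $1+w^2$ falls into the prescribed square class, and the mod-$4\pi$ congruences from the previous lemma make this a direct check. The unramified field $c=5$ (residue field $\mathbb{F}_4$, uniformizer $2$) must be handled separately: in characteristic $2$ one has $x^2+y^2=(x+y)^2$, so every residue is already a square modulo $\mathfrak{p}$ and the genuine obstruction lives modulo $4$; the same norm/square analysis then yields exactly the six residues $1,3,\tfrac{\pm 3\pm\sqrt 5}{2}\pmod 4$, which are visibly the squares together with their negatives.

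The main obstacle I anticipate is the case $c=7$, where $-1$ is a square and hence $(-1,h)_F=1$ for every $h$: over the field $F$ the form $\langle 1,1\rangle$ is isotropic and universal, so every nonzero element, in particular every unit, is a sum of two squares (for instance $h=4-\sqrt 7$ is a unit with $v_{\mathfrak{p}}(h-1)=1$, yet $h=N_{F(\sqrt{-1})/F}(\cdot)$). Thus the criterion $2\mid h-1$ can hold at $c=7$ only under an integral reading of ``sum of two squares'' (namely $x,y\in O_F$), which differs from the field-theoretic meaning used in the isotropy reduction. I would therefore isolate $c=7$ at the outset, settle it directly from $\sqrt{-1}\in F$ after fixing the intended convention, and dispatch the remaining six fields by the index-$2$ norm-subgroup computation above; the bulk of the remaining effort is the careful bookkeeping of $2$-adic valuations and of the mod-$4\pi$ (resp.\ mod-$4$) congruences furnished by the previous lemma.
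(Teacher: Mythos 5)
Your reduction of the problem to computing the norm group of $F(\sqrt{-1})/F$, $F=\mathbb{Q}_2(\sqrt{c})$, is the right framework, and the obstruction you spotted at $c=7$ is genuine; but your positive assertion for the remaining ramified fields --- that for $c\in\{2,3,10,11,14\}$ ``the computation should show that the unit norms are exactly $1+(2)$'' --- is false for $c=3$ and $c=11$, so the step you deferred is not bookkeeping but a step that fails. The inference from ``sums of two squares form an index-$2$ subgroup of $F^\times$'' to ``their unit part has index $2$ in $O_F^\times$'' is valid only when $F(\sqrt{-1})/F$ is \emph{ramified}. For $c=3$ and $c=11$ it is unramified: $-15\equiv-55\equiv1\pmod 8$ are squares in $\mathbb{Q}_2$, hence $-5=\bigl(\sqrt{-15}/\sqrt{3}\bigr)^2$ (resp.\ $\bigl(\sqrt{-55}/\sqrt{11}\bigr)^2$) is a square in $F$, so $F(\sqrt{-1})=F(\sqrt{5})$ is the unramified quadratic extension of $F$; its norm group contains \emph{every} unit, the nontrivial coset being the elements of odd valuation. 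Thus every unit of $\mathbb{Q}_2(\sqrt{3})$ and of $\mathbb{Q}_2(\sqrt{11})$ is a sum of two squares, irrespective of the condition $2\mid h-1$: concretely, $2+\sqrt{3}=\bigl(\tfrac{1+\sqrt{3}}{2}\bigr)^2+\bigl(\tfrac{1+\sqrt{3}}{2}\bigr)^2$ is a unit with $(2+\sqrt{3})-1=1+\sqrt{3}$ of valuation $1$, and likewise $6+\sqrt{11}$ and (your case) $4+\sqrt{7}$. So under the field-theoretic reading on which your whole argument runs, the stated criterion fails for $c\in\{3,7,11\}$, not only for $c=7$; the governing case division is not ``$-1$ a square or not'' but whether $F(\sqrt{-1})/F$ is split ($c=7$), unramified ($c=3,11$), or ramified ($c=2,10,14,5$), and only in the ramified case is the criterion correct. (Note also that your claim that Lemma 1.1 excludes $-1$ from being a square for all $c\neq7$ is not what Lemma 1.1 literally says: its $c=14$ entry allows $r\equiv\pm1$ too, so it cannot be relied on uncritically.)

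This also undercuts the repair you propose at the end. The integral-versus-field discrepancy is not a peculiarity of $c=7$: in any ramified $F$, a unit $h=x^2+y^2$ with $x,y\in O_F$ forces $x+y\in O_F^\times$ and $h\equiv(x+y)^2\equiv1\pmod 2$, so the criterion $2\mid h-1$ can only be the \emph{integral} statement, and the field and integral notions genuinely diverge at $c=3,11$ exactly as at $c=7$ (the representation of $2+\sqrt{3}$ above necessarily has non-integral entries). But the integral statement is not what the paper needs --- Lemma 1.2 is invoked later (e.g.\ in Lemma 2.6) to evaluate Hilbert symbols $(\frac{-1,m}{\mathcal{P}})$, which is precisely the field-theoretic question --- and your opening isotropy/norm equivalence does not apply to it. As structured, then, your proposal proves the lemma under neither reading: carried out correctly it refutes the field version for $c\in\{3,7,11\}$, and it supplies no argument (mod-$2$ plus Hensel lifting would be needed) for the integral version. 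Since the paper itself gives no proof of this lemma beyond ``direct calculation,'' the substantive conclusion of your analysis should be that the statement itself must be amended --- either read integrally (and then decoupled from Hilbert-symbol computations), or corrected field-theoretically by declaring every unit a sum of two squares when $c\in\{3,7,11\}$.
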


By the lemmas above, since $p\equiv3\pmod4$, the Hilbert symbol $(\frac{-1,p}{\mathcal{P}})$ where $P$ is a dyadic spot of $K$, if and only if 
$K_\mathcal{P}\not\simeq\mathbb{Q}_2$, i.e, $K\notin SS(2)$. (For definition and properties of Hilbert symbol, see Section 63B of [2].)

Next determine whether $(\frac{-1,p}{\mathcal{P}})$ where $P$ is a p-adic spot of $K$ is 1. If $K\in SS(p)$ then $K_\mathcal{P}\not\simeq\mathbb{Q}_p$.
If $x^2+y^2=p$ is solvable then $(\frac{-1}{p})=1$, contradicts with $p\equiv3\pmod4$. 
Otherwise, $K_\mathcal{P}$ contains a quadratic extension of $\mathbb{Q}_p$.

\begin{lemma}
    (1)-1 is the sum of two squares in $K$, if and only if $A<0$ and $K\notin SS(2)$.

    (2)If $p\equiv3\pmod4$, $x^2+y^2=p$ is solvable in $K_\mathcal{P}$ if and only if $K\notin SS(2)$ and $K\notin SS(p)$.
\end{lemma}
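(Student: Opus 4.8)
The plan is to run both statements through the local--global principle for isotropy recalled above: a nonzero $m\in K$ is a sum of two squares exactly when the ternary form $x^2+y^2-mz^2$ is isotropic at every spot of $K$. Since $K/\mathbb{Q}$ is Galois, all spots lying over a fixed rational prime have the same ramification and residue data, so the six types of Section 1 pin down the completion $K_{\mathcal{P}}$ up to isomorphism. I would therefore check isotropy spot by spot, grouping the spots into four families: the archimedean spots, the finite odd spots away from the prime in question, the dyadic spots (to be handled by Lemma 1.2), and finally the spots above the relevant prime. This recovers the reduction to dyadic and $p$-adic spots quoted from the reference.

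For part (1) I take $m=-1$, so the form is $x^2+y^2+z^2$. At an archimedean spot this is anisotropic precisely at a real embedding, so I would first translate the condition into ``$K$ totally imaginary.'' Using the parametrization $K=\mathbb{Q}(\sqrt{A(D\pm B\sqrt{D})})$ together with $D=B^2+C^2>B^2$, both conjugates $D\pm B\sqrt{D}$ are positive, so $A(D\pm B\sqrt{D})$ carries the sign of $A$; hence $K$ is totally imaginary iff $A<0$. At every finite odd spot one has $(-1,-1)=1$, so the form is isotropic there and nothing is imposed. At a dyadic spot, if $K\in SS(2)$ then $K_{\mathcal{P}}\simeq\mathbb{Q}_2$, where $(-1,-1)_2=-1$ makes the form anisotropic, whereas if $K\notin SS(2)$ then $K_{\mathcal{P}}$ is a proper extension of $\mathbb{Q}_2$ and Lemma 1.2, applied to a quadratic subextension $\mathbb{Q}_2(\sqrt{c})$, shows $-1$ is a sum of two squares (for $c\neq5$ because $2\mid(-1)-1$, and for $c=5$ because $-1\equiv3\pmod 4$). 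Collecting these gives that $-1$ is a sum of two squares iff $A<0$ and $K\notin SS(2)$.

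For part (2) I take $m=p$ with $p\equiv3\pmod 4$, so the form is $x^2+y^2-pz^2$. Now the archimedean spots impose nothing, since $p>0$ is a sum of two squares in $\mathbb{R}$ and $\mathbb{C}$, which is exactly why $A$ does not reappear. At a finite odd spot $\ell\neq p$ the prime $p$ is a unit, and $x^2+y^2$ represents every unit of $\mathbb{Q}_\ell$ (it is isotropic when $-1$ is a square mod $\ell$, and otherwise the surjective norm form of the residue field), so $p$ is locally a sum of two squares and nothing is imposed. At a dyadic spot Lemma 1.2 again applies: $p$ is a unit with $2\mid p-1$, so $p$ is a sum of two squares in every proper dyadic extension, while over $\mathbb{Q}_2$ one computes $(-1,p)_2=-1$; thus the dyadic obstruction is precisely $K\in SS(2)$. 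The remaining work is at the spots above $p$, and this is where I expect the real effort.

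At the spots over $p$ I would split by type. If $K\in SS(p)$ then $K_{\mathcal{P}}\simeq\mathbb{Q}_p$, where $p$ has odd valuation while the anisotropic form $x^2+y^2$ is the norm form of the unramified quadratic extension and hence represents only even-valuation elements; so $p$ is not a sum of two squares, giving the $SS(p)$ obstruction. If $K\notin SS(p)$, I would show $p$ is always a sum of two squares in $K_{\mathcal{P}}$ via two mechanisms. When the residue field strictly contains $\mathbb{F}_p$ (types $SI(p)$, $II(p)$, $IR(p)$), $-1$ becomes a square, so $x^2+y^2$ is universal and represents $p$. When the residue field is $\mathbb{F}_p$ (the ramified types $SR(p)$ and $RR(p)$), $-1$ need not be a square; here the key observation is that the ramification index $e\in\{2,4\}$ is even, so $v_{\mathcal{P}}(p)=e$ is even, whence $p$ is a square times a unit, and $x^2+y^2$ represents every unit because over $\mathbb{F}_p$ it is the surjective norm form $N_{\mathbb{F}_{p^2}/\mathbb{F}_p}$, lifted by Hensel's lemma. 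The main obstacle is exactly this last point: even though $-1$ fails to be a square in the ramified completions, the evenness of $e$ forces $p$ into the square class of a unit, which is then captured by the two squares. Combining the four families of spots yields that $p$ is a sum of two squares iff $K\notin SS(2)$ and $K\notin SS(p)$.
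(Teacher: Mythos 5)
Your proof is correct, but for part (2) it takes a genuinely different route from the paper's. (For part (1) you essentially expand what the paper dismisses as ``directly obtained'' from [2, 63:12] and Lemma 1.2; the substance matches.) For part (2), the paper fixes a quadratic subextension $\mathbb{Q}_p(\sqrt{c})\subseteq K_{\mathcal{P}}$, notes that $p\equiv3\pmod4$ allows $c\in\{-1,p,-p\}$, and treats the three square classes separately: $p=(\sqrt{p})^2$ for $c=p$; a difference of squares for $c=-1$; and, for $c=-p$, it invokes part (1) to write $-1=u^2+v^2$ \emph{globally} in $K$ and sets $x=u\sqrt{-p}$, $y=v\sqrt{-p}$. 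You instead sort the spots above $p$ by ramification data: when the residue field strictly contains $\mathbb{F}_p$ (types $SI$, $II$, $IR$) you make $-1$ a square by Hensel, and in the ramified types $SR$, $RR$ you use that $v_{\mathcal{P}}(p)=e$ is even together with surjectivity of $N_{\mathbb{F}_{p^2}/\mathbb{F}_p}$ plus Hensel. The case divisions correspond ($c=-1$ is your unramified mechanism, $c=\pm p$ your ramified one), but your handling of the class $c=-p$ is purely local, and that is a real improvement: the paper's appeal to part (1) needs the hypothesis $A<0$, which Lemma 1.3(2) does not assume, so the paper's argument breaks for totally real $K$ (e.g.\ $A>0$, $p\mid A$, with $K_{\mathcal{P}}\simeq\mathbb{Q}_p(\sqrt{-p})$), whereas your argument (equivalently, the observation that $(-1,-1)_p=1$ for odd $p$, so $-1$ is locally a sum of two squares) closes exactly that gap. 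Your decision to check all four families of spots (archimedean, $\ell\neq p$, dyadic, $p$-adic) is also the only reading under which the condition $K\notin SS(2)$ is meaningful --- at the $p$-adic spots alone, solvability is equivalent to $K\notin SS(p)$ --- while the paper verifies the dyadic spots only in the paragraph preceding the lemma and leaves the quantification over $\mathcal{P}$ implicit. What the paper's approach buys is brevity; what yours buys is a self-contained, purely local proof with no hidden hypotheses, which in fact also repairs the paper's own argument.
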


\begin{proof}
    (1)Directly obtained from [2, 63:12] and Lemma 1.2.

    (2)$K_\mathcal{P}$ contains a quadratic extension $\mathbb{Q}_p(\sqrt{c})$ of $\mathbb{Q}_p$. By $p\equiv3\pmod4$ we can assume $c\in\{-1,p,-p\}$.
    For $c=p$ take $x=\sqrt{p}$ and $y=0$. For $c=-p$, by (1) let $u^2+v^2=-1$ in $K$, and take $x=u\sqrt{-p}$ and $y=v\sqrt{-p}$.
    For $c=-1$, $x^2+(\sqrt{-1}y)^2=p$ is of course solvable.
\end{proof}

\begin{corollary}
    For any prime number $p\equiv3\pmod4$, $p$ is a sum of two squares in $K$ if and only if $K\notin SS(2)$ and $K\notin SS(p)$.
\end{corollary}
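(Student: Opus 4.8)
The plan is to reduce the global statement to a finite local computation via Theorem 0.4. By that local--global principle, $p$ is a sum of two squares in $K$ if and only if the ternary form $x^2+y^2-pz^2$ is isotropic at every spot $\mathcal{P}$ of $K$; as observed just after Theorem 0.4, since $\sqrt{-1}\notin K$ this is the same as asking that $x^2+y^2=p$ be solvable in each completion $K_\mathcal{P}$, i.e. that the Hilbert symbol $(\frac{-1,p}{\mathcal{P}})$ equal $1$ for every $\mathcal{P}$. So the problem becomes: for which $K$ does this symbol equal $1$ at all spots simultaneously?

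First I would cut down the set of relevant spots. At every archimedean spot the symbol is $1$ (at a complex spot trivially, at a real spot because $p>0$, so $x^2+y^2=p$ already has a real solution), and at every finite spot $\mathcal{P}$ of odd residue characteristic different from $p$ both $-1$ and $p$ are units, so by [2, 63:12] the symbol is automatically $1$. Hence only the dyadic and the $p$-adic spots can obstruct, and it suffices to analyze these two families. A convenient point here is that $K/\mathbb{Q}$ is Galois, so for a fixed rational prime all completions $K_\mathcal{P}$ above it are isomorphic; this is exactly what makes the six splitting types $RR, SI, SS, SR, II, IR$ describe the local field $K_\mathcal{P}$ unambiguously, and in particular $K_\mathcal{P}\simeq\mathbb{Q}_p$ for some (equivalently every) $\mathcal{P}\mid p$ precisely when $p$ is totally split, i.e. when $K\in SS(p)$.

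For the dyadic spots I would invoke the explicit description of squares and of sums of two squares in $2$-adic extensions from Lemmas 1.1 and 1.2, together with the computation $(\frac{-1,p}{\mathbb{Q}_2})=-1$ for $p\equiv3\pmod4$, to obtain the dichotomy already recorded in the paragraph preceding Lemma 1.3: the symbol at a dyadic $\mathcal{P}$ is $1$ precisely when $K_\mathcal{P}\not\simeq\mathbb{Q}_2$. By the Galois remark this holds at all dyadic spots simultaneously exactly when $2$ is not totally split, that is when $K\notin SS(2)$. For the $p$-adic spots I would quote Lemma 1.3(2) directly: $x^2+y^2=p$ is solvable in $K_\mathcal{P}$ iff $K\notin SS(2)$ and $K\notin SS(p)$. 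Intersecting the dyadic and $p$-adic conditions, the global form is isotropic everywhere iff $K\notin SS(2)$ and $K\notin SS(p)$, which is the claim.

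The genuinely substantive step, and what I expect to be the main obstacle, is the dyadic analysis: one must verify that a $2$-adic unit congruent to $3\pmod4$ becomes a sum of two squares in every proper extension of $\mathbb{Q}_2$ that can occur as a $K_\mathcal{P}$, which is exactly the case-by-case content of Lemmas 1.1 and 1.2. Once those lemmas and Lemma 1.3(2) are available, the remaining work in the corollary is only the bookkeeping of the previous paragraph---discarding the archimedean and good-residue-characteristic spots, using normality to make the local type well defined, and taking the conjunction of the dyadic and $p$-adic conditions.
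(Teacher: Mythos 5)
Your proposal is correct and follows essentially the same route as the paper: the paper's corollary is likewise obtained by combining the local--global principle of Theorem 0.4 with [2, 63:12] to discard all spots except the dyadic and $p$-adic ones, the dyadic dichotomy (symbol $=1$ iff $K_\mathcal{P}\not\simeq\mathbb{Q}_2$, i.e.\ $K\notin SS(2)$) derived from Lemmas 1.1--1.2, and Lemma 1.3(2) for the $p$-adic spots. Your added remarks (normality making the splitting type well defined, positivity at real spots) only make explicit what the paper leaves implicit.
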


\section{General Cases}

Let $\theta=\sqrt{A(D+B\sqrt{D})}$, $m=X+Y\theta$, $X=x_1+x_2\sqrt{D}$, $Y=y_1+y_2\sqrt{D}$, $x_1,x_2,y_1,y_2\in\mathbb{Z}$. Furthermore,
we assume $(x_1,x_2,y_1,y_2)=1$.
 Then we have 
$$N_{K/k}(m)=x_1^2+Dx_2^2+AD(y_1^2+Dy_2^2+2By_1y_2)+\sqrt{D}(2x_1x_2+2ADy_1y_2+AB(y_1^2+Dy_2^2)),$$
$$N_{K/\mathbb{Q}}(m)=(x_1^2+Dx_2^2+AD(y_1^2+Dy_2^2+2By_1y_2))^2-{D}(2x_1x_2+2ADy_1y_2+AB(y_1^2+Dy_2^2))^2.$$

Let $\sigma$ be the generator of $Gal(K/\mathbb{Q})$. First, to make $m$ is the sum of two squares, at least at infinite spots, we need
: If $A>0$, $m,\sigma(m),\sigma^2(m),\sigma^3(m)>0$.

If $p\equiv1\pmod4$, then $x^2=-1$ is solvable in $\mathbb{Z}/p\mathbb{Z}$, and therefore in $\mathbb{Q}_p$.
Hence $(\frac{-1,*}{K_\mathcal{P}})$ is always 1.
So we just need to determine all $(\frac{-1,m}{\mathcal{P}})$s with $\mathcal{P}|2N_{K/\mathbb{Q}}(m)$ and $p\not\equiv1\pmod4$.

Unless specifically claimed, for $\mathcal{P}$ a spot of $K$, denote $\mathfrak{p}=\mathcal{P}\cap k$, $p=\mathcal{P}\cap \mathbb{Q}$.
First we assume $p\neq2$ and $p|N_{K/\mathbb{Q}}(m)$.

(1)$p|D$, i.e, $K\in RR(p)$. Then $p|N_{K/\mathbb{Q}}(m)$ implies $p|x_1$, equivalent that $\mathfrak{p}=(p,\sqrt{D})|N_{K/k}(m)$.
If $\mathcal{P}^2|m$, then $p^2|N_{K/\mathbb{Q}}(m)$, so $p|y_1$. If $\mathcal{P}^3|m$, then $p^3|N_{K/\mathbb{Q}}(m)$, which implies
$p|x_1, p|y_1, p|x_2$. $\mathcal{P}^4|m$ implies $p|(x_1,y_1,x_2,y_2)$, contradicts with the assumption. 

Hence $v_\mathcal{P}(m)=2$ if $p|x_1, p|y_1, p\nmid x_2$, where $(\frac{-1,m}{\mathcal{P}})=1$ always holds. Otherwise, 
$v_\mathcal{P}(m)=1$ or 3, where $(\frac{-1,m}{\mathcal{P}})=1$ is equivalent that -1 is a square in $K_\mathfrak{p}$, i.e, 
$\sqrt{-1}\in K_\mathcal{P}=k_\mathfrak{p}(\theta)$, i.e, $-A(D+B\sqrt{D})\in \mathbb{Q}_p(\sqrt{D})^{*2}$, which is impossible since 
$v_\mathfrak{p}(-A(D+B\sqrt{D}))=1$.

Hence we obtain that

\begin{lemma}
    If $(x_1,x_2,y_1,y_2)=1$, $p\equiv3\pmod4$ and $p|(D,N_{K/\mathbb{Q}}(m))$, then $(\frac{-1,m}{\mathcal{P}})=1$ if $p|x_1, p|y_1, p\nmid x_2$, and -1 otherwise.
\end{lemma}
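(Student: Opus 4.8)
The plan is to read the Hilbert symbol $(\frac{-1,m}{\mathcal{P}})$ off from the $\mathcal{P}$-adic valuation of $m$ alone, after first pinning down how $p$ ramifies. Since $D$ is square-free and $(A,D)=1$, the hypothesis $p\mid D$ forces $p$ to ramify in $k=\mathbb{Q}(\sqrt{D})$, with $\mathfrak{p}=(p,\sqrt{D})$ and $v_\mathfrak{p}(\sqrt{D})=1$; moreover $p\nmid B$, for otherwise $p\mid C$ too and $p^2\mid D$, contradicting square-freeness. I would then invoke Theorem 1.3: for odd $p\mid D$ one has $\mathfrak{p}\mid\Delta_{K/k}$, so $\mathfrak{p}$ ramifies in $K/k$ and $K\in RR(p)$. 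Hence $\mathcal{P}$ is totally ramified over $p$ with $e(\mathcal{P}/p)=4$, the residue field of $K_\mathcal{P}$ is $\mathbb{F}_p$, and $v_\mathcal{P}(\sqrt{D})=2$, while $v_\mathcal{P}(\theta)=1$ because $v_\mathfrak{p}(A(D+B\sqrt{D}))=1$ (here $v_\mathfrak{p}(D)=2$ but $v_\mathfrak{p}(B\sqrt{D})=1$ and $p\nmid A$).

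The second step is the valuation bookkeeping. Reducing the norm formula modulo $p$ gives $N_{K/\mathbb{Q}}(m)\equiv x_1^4\pmod p$, so $p\mid N_{K/\mathbb{Q}}(m)$ already forces $p\mid x_1$. Writing $m=x_1+x_2\sqrt{D}+y_1\theta+y_2\sqrt{D}\,\theta$, the four monomials $1,\sqrt{D},\theta,\sqrt{D}\,\theta$ carry $\mathcal{P}$-valuations $0,2,1,3$ respectively, which are pairwise distinct modulo $4$. Hence, whenever the integer coefficient of a monomial is prime to $p$ its term contributes exactly that base valuation, no two terms can share a valuation and cancel, and $v_\mathcal{P}(m)$ is the unambiguous minimum of the surviving valuations. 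Given $p\mid x_1$, a short case split shows $v_\mathcal{P}(m)=1$ if $p\nmid y_1$; $v_\mathcal{P}(m)=2$ if $p\mid y_1$ but $p\nmid x_2$; $v_\mathcal{P}(m)=3$ if $p\mid y_1,\,p\mid x_2,\,p\nmid y_2$; and the remaining case $p\mid x_1,x_2,y_1,y_2$ is excluded by $(x_1,x_2,y_1,y_2)=1$. In particular $v_\mathcal{P}(m)$ is even precisely in the asserted case $p\mid x_1,\,p\mid y_1,\,p\nmid x_2$.

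Finally I would convert the parity of $v_\mathcal{P}(m)$ into the symbol. Since $K_\mathcal{P}$ has odd residue characteristic $p$, the symbol of two units is trivial, so writing $m=\pi^{v_\mathcal{P}(m)}u$ for a uniformizer $\pi$ and unit $u$ gives $(\frac{-1,m}{\mathcal{P}})=(\frac{-1,\pi}{\mathcal{P}})^{v_\mathcal{P}(m)}$. When $v_\mathcal{P}(m)$ is even this is $1$. When $v_\mathcal{P}(m)$ is odd, $(\frac{-1,m}{\mathcal{P}})=(\frac{-1,\pi}{\mathcal{P}})$, which equals $1$ iff $-1$ is a square in $\mathbb{F}_p$; as $p\equiv3\pmod4$ it is not, so the symbol is $-1$. (Equivalently, in the odd case the symbol is $1$ iff $\sqrt{-1}\in K_\mathcal{P}=k_\mathfrak{p}(\theta)$, i.e. iff $-A(D+B\sqrt{D})\in k_\mathfrak{p}^{*2}$, which fails since its $\mathfrak{p}$-valuation is $1$.) Combining with the previous step gives exactly the statement.

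The main obstacle I anticipate is the valuation bookkeeping of the second step: one must verify that the base valuations $0,2,1,3$ of the four monomials are genuinely incongruent modulo the ramification index $4$, so that $v_\mathcal{P}(m)$ is the honest minimum with no cross-term cancellation, and one must track the boundary cases carefully enough to see that the gcd condition is precisely what rules out $v_\mathcal{P}(m)\ge4$. Everything else — the ramification type and the passage from valuation parity to the Hilbert symbol — is routine once $p\equiv3\pmod4$ is used to make $-1$ a nonsquare in $\mathbb{F}_p$.
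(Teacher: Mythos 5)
Your proof is correct and follows essentially the same route as the paper: identify that $p\mid D$ puts $K$ in $RR(p)$ so that $\mathcal{P}$ is totally ramified of index $4$ with residue field $\mathbb{F}_p$, determine $v_\mathcal{P}(m)$ from the divisibility of $x_1,y_1,x_2,y_2$ (the paper does this via $\mathcal{P}^k\mid m \Rightarrow p^k\mid N_{K/\mathbb{Q}}(m)$, you via the pairwise-distinct valuations $0,2,1,3$ of the monomials $1,\sqrt{D},\theta,\sqrt{D}\theta$ modulo $4$ --- the same computation), and then convert the parity of $v_\mathcal{P}(m)$ into the Hilbert symbol using that $-1$ is a nonsquare when $p\equiv3\pmod4$, exactly as the paper does (your parenthetical remark reproduces its argument that $-A(D+B\sqrt{D})\in k_\mathfrak{p}^{*2}$ is impossible for valuation reasons). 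If anything, your bookkeeping is slightly more careful than the paper's (e.g.\ you verify $p\nmid B$ and rule out cross-term cancellation explicitly); the only slip is that the discriminant result you cite is Theorem 0.3, not 1.3.
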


(2)$pO_k$ inerts.

If $K\in II(p)$, the assumption decides $v_p(m)=0$, contradicts the assumption that $p\nmid m$ and $p|N_{K/\mathbb{Q}}(m)$.

If $K\in IR(p)$, the assumption implies $v_\mathcal{P}(m)=1$.
Since $p\equiv3\pmod4$ we have $k_\mathfrak{p}\simeq\mathbb{Q}_p(\sqrt{-1})$, where $-1$ is always a square.
Hence $(\frac{-1,m}{\mathcal{P}})=1$. 

\begin{lemma}
    If $p\equiv3\pmod4$, $(\frac{D}{p})=-1$, then $(\frac{-1,m}{\mathcal{P}})=1$.
\end{lemma}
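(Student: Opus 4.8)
The plan is to observe that the hypothesis $(\frac{D}{p})=-1$ says exactly that $D$ is a non-square unit mod $p$, i.e.\ that $p$ is inert in $k=\mathbb{Q}(\sqrt{D})$. This is precisely case (2) treated in the text above, so $K$ must be of type II($p$) or type IR($p$), and I would simply dispose of the two subtypes in turn, recapitulating the analysis already sketched there.

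First I would eliminate type II($p$). In this case $pO_K$ stays prime, so there is a single place $\mathcal{P}$ above $p$ with residue degree $4$, whence $v_p(N_{K/\mathbb{Q}}(m))=4\,v_\mathcal{P}(m)$. The standing hypothesis $p\mid N_{K/\mathbb{Q}}(m)$ then forces $v_\mathcal{P}(m)\geq1$, i.e.\ $\mathcal{P}\mid m$; since $\mathcal{P}=pO_K$ this means $p\mid x_1,x_2,y_1,y_2$, contradicting the primitivity assumption $(x_1,x_2,y_1,y_2)=1$. Hence under the hypotheses only type IR($p$) can actually arise, and it suffices to settle that case.

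For type IR($p$) the real content is an identification of the local field $k_\mathfrak{p}=\mathbb{Q}_p(\sqrt{D})$. Because $(\frac{D}{p})=-1$ with $p\nmid D$, the element $D$ is a non-square unit, so $\mathbb{Q}_p(\sqrt{D})$ is the unramified quadratic extension of $\mathbb{Q}_p$. The hypothesis $p\equiv3\pmod4$ enters here: it makes $-1$ a non-square unit as well, so $\mathbb{Q}_p(\sqrt{-1})$ is also unramified of degree $2$. By uniqueness of the unramified quadratic extension of $\mathbb{Q}_p$ I conclude $k_\mathfrak{p}=\mathbb{Q}_p(\sqrt{D})=\mathbb{Q}_p(\sqrt{-1})$, and therefore $\sqrt{-1}\in k_\mathfrak{p}\subseteq K_\mathcal{P}$.

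Once $-1$ is a square in $K_\mathcal{P}$ the conclusion is immediate: the form $x^2+y^2-mz^2$ is already isotropic over $K_\mathcal{P}$ (take $z=0$, $x=1$, $y=\sqrt{-1}$), equivalently every element of $K_\mathcal{P}$ is a norm from $K_\mathcal{P}(\sqrt{-1})=K_\mathcal{P}$, so $(\frac{-1,m}{\mathcal{P}})=1$ for every $m$. I expect the only step requiring any care to be the local identification $k_\mathfrak{p}=\mathbb{Q}_p(\sqrt{-1})$ in the IR($p$) case; everything else is bookkeeping. Notably, no valuation or ramification subtlety about $m$ at $\mathcal{P}$ ever needs to be examined, because $\sqrt{-1}$ already lives in the unramified subfield $k_\mathfrak{p}$, which is what makes the Hilbert symbol trivial irrespective of the behaviour of $m$.
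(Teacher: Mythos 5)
Your proposal is correct and follows essentially the same route as the paper: the paper likewise splits into the types II($p$) and IR($p$), rules out II($p$) by noting that $p\mid N_{K/\mathbb{Q}}(m)$ would force $p\mid m$ and contradict $(x_1,x_2,y_1,y_2)=1$, and in the IR($p$) case identifies $k_\mathfrak{p}\simeq\mathbb{Q}_p(\sqrt{-1})$ (uniqueness of the unramified quadratic extension, using $p\equiv3\pmod4$) so that $-1$ is a square in $K_\mathcal{P}$ and the Hilbert symbol is trivially $1$. Your added observation that no valuation analysis of $m$ is needed is accurate and in fact slightly cleaner than the paper, which mentions $v_\mathcal{P}(m)=1$ without using it.
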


(3)$pO_k$ splits into $\mathfrak{p}_1\mathfrak{p}_2$, i.e, $(\frac{D}{p})=1$,
then $k_{\mathfrak{p}_1}\simeq k_{\mathfrak{p}_2}\simeq\mathbb{Q}_p$.

If $K\in SS(p)$, with $\mathfrak{p}_1=\mathcal{P}_{11}\mathcal{P}_{12}$ and $\mathfrak{p}_2=\mathcal{P}_{21}\mathcal{P}_{22}$,
then there are more possible cases.

Case A. If both $m\sigma(m)$ and $m\sigma^{-1}(m)$ are divided by $p$, then one and only one of $\mathfrak{p}_1$ and $\mathfrak{p}_2$
divides $m$. Without loss of generality, assume $\mathfrak{p}_1|m$, and 
$v_{\mathcal{P}_{11}}(m)\geq v_{\mathcal{P}_{12}}(m)=v_{\mathfrak{p}_1}(m)$,
$v_{\mathcal{P}_{21}}(m)\geq v_{\mathcal{P}_{22}}(m)=0$,
then we have 
$$v_{\mathcal{P}_{21}}(m)=v_{\mathfrak{p}_{2}}(m\sigma^2(m))=v_{\mathfrak{p}_{2}}(N_{K/k}(m)),$$
$$v_{\mathcal{P}_{11}}(m)=v_p(N_{K/\mathbb{Q}}(m))-0-v_{\mathfrak{p}_1}(m)-v_{\mathfrak{p}_{2}}(N_{K/k}(m)),$$
Hence we have: $(\frac{-1,m}{\mathcal{P}_{ij}})=1$ for $i,j\in\{1,2\}$ if and only if 
$v_{\mathfrak{p}_1}(m)(:=min\{v_{\mathfrak{p}_1}(X),v_{\mathfrak{p}_1}(Y)\})$, $v_{\mathfrak{p}_{2}}(N_{K/k}(m))$ and $v_p(N_{K/\mathbb{Q}}(m))$ are even, otherwise
$(\frac{-1,m}{\mathcal{P}_{ij}})$s are distinct.

Case B. Only one of $m\sigma(m)$ and $m\sigma^{-1}(m)$ is divided by $p$, then neither $\mathfrak{p}_1$ nor $\mathfrak{p}_2$
divides $m$. Suppose $p|m\sigma(m)$, then $v_p(m\sigma(m))$ is exactly the lower one of the two 
non-zero $v_{\mathcal{P}_{ij}}$s, with the higher one $v_p(N_{K/\mathbb{Q}}(m))-v_p(m\sigma(m))$.
Hence we have: $(\frac{-1,m}{\mathcal{P}_{ij}})=1$ for $i,j\in\{1,2\}$ if and only if
both $v_p(m\sigma(m))$ and $v_p(N_{K/\mathbb{Q}}(m))$ are even, otherwise $(\frac{-1,m}{\mathcal{P}_{ij}})$s are distinct.

Case C. Neither $m\sigma(m)$ nor $m\sigma^{-1}(m)$ is divided by $p$, then the only non-zero $v_{\mathcal{P}_{ij}}(m)$ is 
exactly $v_p(N_{K/\mathbb{Q}}(m))$. So $(\frac{-1,m}{\mathcal{P}_{ij}})=1$ for $i,j\in\{1,2\}$ if and only if
$v_p(N_{K/\mathbb{Q}}(m))$ is even, otherwise $(\frac{-1,m}{\mathcal{P}_{ij}})$s are distinct.

For convenience we express

Condition $[m, p]$: 
(1)If both $m\sigma(m)$ and $m\sigma^{-1}(m)$ are divided by $p$ with $\mathfrak{p}_1|m$, 
$v_{\mathfrak{p}_1}(m)=min\{v_{\mathfrak{p}_1}(X),v_{\mathfrak{p}_1}(Y)\}$, $v_{\mathfrak{p}_{2}}(N_{K/k}(m))$ and $v_p(N_{K/\mathbb{Q}}(m))$ are even;

(2)If only $m\sigma(m)$(resp. $m\sigma^{-1}(m)$) is divided by $p$, then both $v_p(m\sigma(m))$(resp. $m\sigma^{-1}(m)$)
and $v_p(N_{K/\mathbb{Q}}(m))$ are even;

(3)If neither $m\sigma(m)$ nor $m\sigma^{-1}(m)$ is divided by $p$, $v_p(N_{K/\mathbb{Q}}(m))$ is even.

\begin{lemma}
    Assume $p\equiv3\pmod4$. If $(\frac{D}{p})=1$ and $(\frac{A(D+Bc)}{p})=1$ where $c$ is an integer that $c^2\equiv D\pmod p$,
    then
    
    (1)$(\frac{-1,m}{\mathcal{P}})=1$ for all $p$-adic places $\mathcal{P}$, if and only if condition $[m, p]$ holds;

    (2)Otherwise, $(\frac{-1,m}{\mathcal{P}})$s are distinct.

\end{lemma}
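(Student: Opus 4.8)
The plan is to reduce the statement to a parity count of valuations at the primes above $p$. Under the two hypotheses I would first record that $K\in SS(p)$ with all four local degrees equal to $1$: the splitting $pO_k=\mathfrak{p}_1\mathfrak{p}_2$ comes from $(\frac{D}{p})=1$, and in $k_{\mathfrak{p}_i}\cong\mathbb{Q}_p$ the element $\theta^2=A(D+B\sqrt{D})$ specializes to $A(D\pm Bc)$, a $p$-adic unit (since $(A,D)=1$ and $p\nmid D$ force $p\nmid A(D+Bc)$) which is a square in $\mathbb{Q}_p^*$ by $(\frac{A(D+Bc)}{p})=1$; hence $\theta\in\mathbb{Q}_p$ and each $\mathfrak{p}_i$ splits, giving $K_{\mathcal{P}_{ij}}\cong\mathbb{Q}_p$ at all four primes $\mathcal{P}_{ij}$. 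Because $p\equiv3\pmod4$ we have $(\frac{-1}{p})=-1$, so the standard formula for the Hilbert symbol over $\mathbb{Q}_p$ with $p$ odd gives $(\frac{-1,m}{\mathcal{P}_{ij}})=(\frac{-1}{p})^{v_{\mathcal{P}_{ij}}(m)}=(-1)^{v_{\mathcal{P}_{ij}}(m)}$. Thus all four symbols equal $1$ exactly when all four valuations $v_{\mathcal{P}_{ij}}(m)$ are even, and this is the only thing left to analyze.

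Next I would compute those valuations from global data. Write $a,b$ for the valuations at the two primes over $\mathfrak{p}_1$ and $c,d$ for those over $\mathfrak{p}_2$. Since $\sigma^2$ generates $Gal(K/k)$ and interchanges the two primes over each $\mathfrak{p}_i$, the relation $N_{K/k}(m)=m\,\sigma^2(m)$ yields $v_{\mathfrak{p}_1}(N_{K/k}(m))=a+b$ and $v_{\mathfrak{p}_2}(N_{K/k}(m))=c+d$, while $v_p(N_{K/\mathbb{Q}}(m))=a+b+c+d$ (all residue degrees being $1$) and $v_{\mathfrak{p}_i}(m)=\min$ of the paired valuations, which one checks equals the paper's $\min\{v_{\mathfrak{p}_i}(X),v_{\mathfrak{p}_i}(Y)\}$ because $\theta$ is a local unit. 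The elementary observation driving the whole argument is that $a,b$ are both even if and only if $\min(a,b)$ and $a+b$ are both even, since then $\max(a,b)=(a+b)-\min(a,b)$ is even too; likewise for $c,d$. Hence ``all four valuations even'' is equivalent to $v_{\mathfrak{p}_1}(m),v_{\mathfrak{p}_2}(m),v_{\mathfrak{p}_1}(N_{K/k}(m)),v_{\mathfrak{p}_2}(N_{K/k}(m))$ all being even. I would then check this matches condition $[m,p]$ case by case; e.g.\ when $\mathfrak{p}_1\mid m$, $\mathfrak{p}_2\nmid m$ one has $v_{\mathfrak{p}_2}(m)=0$ automatically, and the remaining evenness requirements of $[m,p](1)$ reproduce my criterion after substituting $v_p(N_{K/\mathbb{Q}}(m))=v_{\mathfrak{p}_1}(N_{K/k}(m))+v_{\mathfrak{p}_2}(N_{K/k}(m))$.

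For the dichotomy in part (2) the decisive input is the coprimality $(x_1,x_2,y_1,y_2)=1$. I would show it forces $p\nmid m$, i.e.\ at least one of $a,b,c,d$ is $0$: specializing $m$ at the four primes and using $c^2\equiv D$, $p\nmid D$, simultaneous divisibility at all four would force $p\mid x_1,x_2,y_1,y_2$. Consequently at least one symbol $(\frac{-1,m}{\mathcal{P}_{ij}})$ equals $(-1)^0=1$, so the four symbols can never all be $-1$; they are therefore either all $+1$ (which by part (1) is condition $[m,p]$) or take both values, the asserted ``distinct'' alternative.

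The step I expect to be the main obstacle is faithfully reconciling my clean parity criterion with the three-case form of $[m,p]$, since the individual $v_{\mathcal{P}_{ij}}(m)$ are not Galois invariant and the split into cases rests on the divisibility of $m\sigma(m)$ and $m\sigma^{-1}(m)$ by $p$. Here the structural fact to keep in mind is $m\sigma^{-1}(m)=\sigma^{-1}(m\sigma(m))$, so $p$ divides one as an element precisely when it divides the other; the delicate configuration to track is when exactly one prime over each of $\mathfrak{p}_1,\mathfrak{p}_2$ divides $m$. I would handle each case by reducing it to ``the four governing valuations are even'' via the $(\min,\text{sum})$ reconstruction above, rather than through $v_p(N_{K/\mathbb{Q}}(m))$ alone, which does not by itself separate the parities of $a$ and $c$.
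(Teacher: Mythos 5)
Your core argument is the same as the paper's (the lemma has no separate proof; its proof is the preceding Case A/B/C discussion): under the two hypotheses all four completions $K_{\mathcal{P}_{ij}}$ are $\mathbb{Q}_p$, the formula for the Hilbert symbol over $\mathbb{Q}_p$ with $p$ odd gives $(\frac{-1,m}{\mathcal{P}_{ij}})=(-1)^{v_{\mathcal{P}_{ij}}(m)}$, and everything reduces to whether the four valuations are all even; your $(\min,\mathrm{sum})$ parity reconstruction is exactly the computation the paper performs in its Case A, and your coprimality argument that some valuation vanishes (so the symbols are either all $+1$ or take both values) is the proof of part (2) that the paper leaves implicit. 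One local repair: $p\nmid A(D-Bc)$ does \emph{not} follow from $(A,D)=1$ and $p\nmid D$; if $p\mid C$ then $D\equiv B^2$ and $A(D-Bc)\equiv 0\pmod p$. Justify full splitting instead by noting that $Gal(K/\mathbb{Q})$ permutes the four primes above $p$ transitively, so $\mathfrak{p}_2$ splits in $K$ as soon as $\mathfrak{p}_1$ does; when $p\nmid C$ one can also use $(\frac{A(D+Bc)}{p})(\frac{A(D-Bc)}{p})=(\frac{A^2DC^2}{p})=1$.

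The step you defer, ``check this matches condition $[m,p]$ case by case,'' is the one genuine gap, and it cannot be closed, because your criterion and $[m,p]$ are \emph{not} equivalent --- and yours is the correct one. Your observation $m\sigma^{-1}(m)=\sigma^{-1}(m\sigma(m))$ shows that, reading divisibility of elements, the paper's Case B never occurs; the configuration it was meant to capture (exactly one prime above each $\mathfrak{p}_i$ divides $m$, i.e.\ two cyclically adjacent valuations positive) then falls under clause (3) of $[m,p]$, which only requires $v_p(N_{K/\mathbb{Q}}(m))$ --- the \emph{sum} of the two positive valuations --- to be even, not each separately. That is false. Concretely, in $K=\mathbb{Q}(\sqrt{5+2\sqrt{5}})$ (so $A=1$, $B=2$, $C=1$, $D=5$) take $p=19$, $c=9$, so $(\frac{5}{19})=(\frac{23}{19})=1$, and $m=6+16\sqrt{5}+\theta$. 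Then $N_{K/k}(m)=1311+190\sqrt{5}$, $N_{K/\mathbb{Q}}(m)=19^{2}\cdot 4261$ with $19\nmid 4261$, and $m$ has valuation $1$ at one prime over each $\mathfrak{p}_i$ and $0$ at the other two; hence neither $\mathfrak{p}_i$ divides $m$, neither $m\sigma(m)$ nor $m\sigma^{-1}(m)$ is divisible by $19$, and $[m,19]$ holds, yet two of the four symbols equal $-1$. So do not try to reconcile: finish by proving the lemma with clause (3) replaced by your four-parity criterion --- when neither $\mathfrak{p}_i$ divides $m$, require $v_{\mathfrak{p}_1}(N_{K/k}(m))$ and $v_{\mathfrak{p}_2}(N_{K/k}(m))$ to be even separately (this subsumes the single-valuation case, and your criterion agrees with $[m,p]$(1) in Case A). Completed this way, your argument is a correction of the paper's proof rather than a reproduction of it: the paper's Cases B and C, and with them the printed lemma, fail in exactly this adjacent configuration.
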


If $K\in SI(p)$, then $K_{\mathcal{P}_1}\simeq K_{\mathcal{P}_2}$ is unramified on $\mathbb{Q}_p$, i.e, 
$K_{\mathcal{P}_1}\simeq K_{\mathcal{P}_2}\simeq\mathbb{Q}_p(\sqrt{-1})$ since $p\equiv3\pmod4$.
Hence $-1$ is a square in $K_{\mathcal{P}_1}\simeq K_{\mathcal{P}_2}$. 
So $(\frac{-1,m}{\mathcal{P}_1})=(\frac{-1,m}{\mathcal{P}_2})=1$.

If $K\in SR(p)$, i.e, $p|A$, $\mathfrak{p}_1O_K=\mathcal{P}_1^2$ and $\mathfrak{p}_2O_K=\mathcal{P}_2^2$, then 
$K_{\mathcal{P}_1}\simeq K_{\mathcal{P}_2}$
is quadratic ramified extension on $\mathbb{Q}_p$, i.e, 
$K_{\mathcal{P}_1}\simeq K_{\mathcal{P}_2}\simeq \mathbb{Q}_p(\sqrt{p})$ or $\mathbb{Q}_p(\sqrt{-p})$,
depending on whether $(\frac{(A(D+Bc))/p}{p})$ is 1, or not(where $c$ is an integer that $c^2\equiv D\pmod p$). 
Hence $-1$ is a non-square in $K_{\mathcal{P}_1}\simeq K_{\mathcal{P}_2}$, and then
$(\frac{-1,m}{\mathcal{P}_i})=(-1)^{v_{\mathcal{P}_i}(m)}$, $i=1,2$.
To calculate $v_{\mathcal{P}_i}(m)$, without loss of generality, assume that $v_{\mathcal{P}_1}(m)\geq v_{\mathcal{P}_2}(m)$, 
then $v_{\mathcal{P}_2}(m)$ must be 1, if $p|m^2$; or 0, if not.
If $p\nmid m^2$, then $v_{\mathcal{P}_1}(m)=v_{p}(N_{K/\mathbb{Q}}(m))$.
If $p|m^2$, then $v_{\mathcal{P}_1}(m)=v_{p}(N_{K/\mathbb{Q}}(m))-1$.
Especially, for $0\neq M\in\mathbb{Q}$, $v_{\mathcal{P}_1}(M)$ is always even.

\begin{lemma}
    Assume $p\equiv3\pmod4$. 
    If $(\frac{D}{p})=1$ and $p|A$, then     
    $(\frac{-1,m}{\mathcal{P}})=1$ for every $p$-adic spots $\mathcal{P}$, if and only if
    $p\nmid m^2$, and $v_{p}(N_{K/\mathbb{Q}}(m))$ is even.
\end{lemma}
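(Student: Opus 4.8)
The plan is to reduce the statement entirely to the local valuation computations already carried out for the $SR(p)$ case just above, and then to run a short parity case analysis on the two local valuations. Since $K\in SR(p)$ means $pO_K=\mathcal{P}_1^2\mathcal{P}_2^2$, there are exactly two $p$-adic spots of $K$, namely $\mathcal{P}_1$ and $\mathcal{P}_2$. Hence the assertion ``$\left(\frac{-1,m}{\mathcal{P}}\right)=1$ for every $p$-adic spot $\mathcal{P}$'' is literally the conjunction of $\left(\frac{-1,m}{\mathcal{P}_1}\right)=1$ and $\left(\frac{-1,m}{\mathcal{P}_2}\right)=1$. Invoking the identity $\left(\frac{-1,m}{\mathcal{P}_i}\right)=(-1)^{v_{\mathcal{P}_i}(m)}$ established above, which rests on $-1$ being a non-square unit in the ramified local field $K_{\mathcal{P}_i}\simeq\mathbb{Q}_p(\sqrt{\pm p})$ (itself forced by $p\equiv3\pmod4$), this conjunction is equivalent to requiring both $v_{\mathcal{P}_1}(m)$ and $v_{\mathcal{P}_2}(m)$ to be even.

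Next I would bring in the global norm to convert the parity of the two local valuations into a parity condition on a single rational quantity. Because each $\mathcal{P}_i$ has residue degree $1$ over $p$ (the prime $\mathfrak{p}_i=\mathcal{P}_i\cap k$ has residue degree $1$ over $p$ since $p$ splits in $k$, and $\mathcal{P}_i/\mathfrak{p}_i$ is ramified), the norm bookkeeping gives
\[
v_p\left(N_{K/\mathbb{Q}}(m)\right)=v_{\mathcal{P}_1}(m)+v_{\mathcal{P}_2}(m).
\]
Combining this with the dichotomy recorded just before the statement --- after normalizing $v_{\mathcal{P}_1}(m)\geq v_{\mathcal{P}_2}(m)$, one has $v_{\mathcal{P}_2}(m)=0$ when $p\nmid m^2$ and $v_{\mathcal{P}_2}(m)=1$ when $p\mid m^2$, with $v_{\mathcal{P}_1}(m)=v_p(N_{K/\mathbb{Q}}(m))$ and $v_p(N_{K/\mathbb{Q}}(m))-1$ respectively --- pins down both valuations from $v_p(N_{K/\mathbb{Q}}(m))$ alone.

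The argument then splits along that dichotomy. If $p\nmid m^2$, then $v_{\mathcal{P}_2}(m)=0$ is even and $v_{\mathcal{P}_1}(m)=v_p\left(N_{K/\mathbb{Q}}(m)\right)$, so both valuations are even precisely when $v_p\left(N_{K/\mathbb{Q}}(m)\right)$ is even; this is exactly the stated criterion. If instead $p\mid m^2$, then $v_{\mathcal{P}_2}(m)=1$ is odd, so $\left(\frac{-1,m}{\mathcal{P}_2}\right)=-1$ and the condition on all $p$-adic spots fails --- which is consistent, since the right-hand side already demands $p\nmid m^2$. Assembling the two cases gives the claimed equivalence.

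The only place where genuine content resides, rather than routine symbol-pushing, is the dichotomy $v_{\mathcal{P}_2}(m)\in\{0,1\}$ established before the statement: ruling out that the smaller valuation is $\geq 2$ is where the primitivity normalization $(x_1,x_2,y_1,y_2)=1$ must be used, since $v_{\mathcal{P}_1}(m),v_{\mathcal{P}_2}(m)\geq 2$ would force $pO_K\mid mO_K$. I therefore expect no real obstacle in the lemma itself; the substance sits in that preliminary valuation analysis and in the residue-degree count behind $v_p\left(N_{K/\mathbb{Q}}(m)\right)=v_{\mathcal{P}_1}(m)+v_{\mathcal{P}_2}(m)$, and once both are granted the lemma follows immediately from the parity case split.
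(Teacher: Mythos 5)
Your proposal follows the same route as the paper: the paper's entire justification for this lemma is the paragraph preceding it, namely that each $K_{\mathcal{P}_i}$ is a ramified quadratic extension of $\mathbb{Q}_p$ in which $-1$ is a nonsquare unit (so $(\frac{-1,m}{\mathcal{P}_i})=(-1)^{v_{\mathcal{P}_i}(m)}$), plus the dichotomy that the smaller valuation is $0$ or $1$, plus the norm bookkeeping; your parity case split just writes out what the paper leaves implicit. The one place you go beyond the paper is in justifying the dichotomy, which the paper merely asserts --- and that justification contains a genuine gap.

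You claim that $v_{\mathcal{P}_1}(m),v_{\mathcal{P}_2}(m)\geq2$, i.e.\ $pO_K\mid mO_K$, would contradict $(x_1,x_2,y_1,y_2)=1$. That implication requires $p$ to be coprime to the index of the order $\mathbb{Z}[1,\sqrt{D},\theta,\sqrt{D}\theta]$ in $O_K$; comparing the discriminant $2^8A^2D^3C^2$ of this order with $\Delta_K$ in Theorem 0.3 shows the index is a power of $2$ times $|C|$, so your argument is valid only when $p\nmid C$. The lemma's hypotheses do not exclude $p\mid C$: from $p\mid A$ and $(A,D)=1$ we get $p\nmid D$, and $p\mid C$ then forces $D\equiv B^2\pmod p$, hence $(\frac{D}{p})=1$ automatically. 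When $p\mid C$, one of $D\pm Bc$ has $p$-adic valuation $2v_p(C)$, so after labeling $v_{\mathcal{P}_2}(\theta)=1+2v_p(C)\geq3$, and primitive $m$ divisible by $p$ in $O_K$ exist. Concretely, take $A=3$, $B=2$, $C=3$, $D=13$, $p=3$, $m=3+(1+\sqrt{13})\theta$: then $\bigl((1+\sqrt{13})\theta/3\bigr)^2=78+18\sqrt{13}$, so $(1+\sqrt{13})\theta/3\in O_K$ and $3\mid m$ in $O_K$ even though $(x_1,x_2,y_1,y_2)=(3,0,1,1)$ is primitive; one computes $v_{\mathcal{P}_1}(m)=v_{\mathcal{P}_2}(m)=2$ and $N_{K/\mathbb{Q}}(m)=3^4\cdot17\cdot101$. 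Both Hilbert symbols are therefore $+1$, while the right-hand side of the lemma fails because $3\mid m^2$. So the gap is not repairable: the dichotomy, and with it the stated equivalence, is false when $p\mid C$. Both your proof and the paper's argument are correct only under the additional hypothesis $p\nmid C$ (in which case your primitivity reasoning does work, and is a genuine improvement on the paper, which asserts the dichotomy without proof).
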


In conclusion, the arguments above can be used to determine all $(\frac{-1,m}{\mathcal{P}})$s at all non-dyadic spots $\mathcal{P}$.

Finally, we investigate dyadic cases. 
Let $\mathcal{P}$ be a dyadic spot of $K$.
If $2O_k$ does not split, then $K$ has only one dyadic spot, and by Hilbert Reciprocity Law,
$x^2+y^2-mz^2=0$ is isotropic at this spot whereas $m\neq0$. 
Otherwise $D\equiv 1\pmod8$, $[K_\mathcal{P}:\mathbb{Q}_p]$ is at most 2.
We employ lemma 1.2 and 1.3. Let 
$$e(w)=\left\{
    \begin{array}{lll}
        1 &   & if\ w\equiv 1^*\\
        3 &   & if\ w\equiv 9^*\\
        5 &   & if\ w\equiv 25^*\\
        7 &   & if\ w\equiv 49^*\\
        9 &   & if\ w\equiv 81^*\\
        11 &   & if\ w\equiv 121^*\\
        13 &   & if\ w\equiv 169^*\\
        15 &   & if\ w\equiv 225^*\\
        17 &   & if\ w\equiv 33^*\\
        19 &   & if\ w\equiv 105^*\\
        21 &   & if\ w\equiv 185^*\\
        23 &   & if\ w\equiv 17^*\\
        25 &   & if\ w\equiv 113^*\\
        27 &   & if\ w\equiv 217^*\\
        29 &   & if\ w\equiv 73^*\\
        31 &   & if\ w\equiv 193^*\\
        33 &   & if\ w\equiv 65^*\\
        35 &   & if\ w\equiv 201^*\\
        37 &   & if\ w\equiv 89^*\\
        39 &   & if\ w\equiv 241^*\\
        41 &   & if\ w\equiv 145^*\\
        43 &   & if\ w\equiv 57^*\\
        45 &   & if\ w\equiv 233^*\\
        47 &   & if\ w\equiv 161^*\\
        49 &   & if\ w\equiv 97^*\\
        51 &   & if\ w\equiv 41^*\\
        53 &   & if\ w\equiv 249^*\\
        55 &   & if\ w\equiv 209^*\\
        57 &   & if\ w\equiv 177^*\\
        59 &   & if\ w\equiv 153^*\\
        61 &   & if\ w\equiv 137^*\\
        63 &   & if\ w\equiv 129^*\\
        s_1s_2\dots s_ge(W) &  & if\ w=s_1^2s_2^2\dots s_g^2W,W\ square-free
    \end{array}
\right.$$
(Here, $w\equiv1^*$ means $w\equiv 1\pmod{256}$ and square-free, and so forth.)
Hence for $w\equiv1\pmod8$, $\sqrt{w}\equiv e(w)\pmod{64}$ in $\mathbb{Q}_2$.
(Note that there are two solutions of $x^2\equiv w\pmod{64}$ in $\mathbb{Q}_2$. However, by taking conjugations we can take either of them as the value
of $\sqrt{w}$ in $\mathbb{Q}_2$.)

If $K\in SS(2)$, i.e. $D\equiv 1\pmod8$ and $A(D+Bc)=t^2T$ with $T$ square-free and $T\equiv 1\pmod8$, 
then $K_\mathcal{P}$s are all isomorphic to $\mathbb{Q}_2$, and
$m\equiv (x_1+y_1e(D))+(x_2+y_2e(D))e(A(D+Be(D)))\pmod{16}$. 
(Note that $x_i$ and $y_i$s can be all odd making $m$ a twice. If that happens, we still have 
$\frac{m}{2}\equiv \frac{1}{2}(x_1+y_1e(D))+(x_2+y_2e(D))e(A(D+Be(D)))\pmod8$, where 
$\frac{1}{2}(x_1+y_1e(D))+(x_2+y_2e(D))e(A(D+Be(D)))$ is no longer a twice.)

\begin{lemma}
    If $D\equiv 1\pmod8$ and $A(D+Be(D))=t^2T$ with $T$ square-free and $T\equiv 1\pmod8$,
    then $(\frac{-1,m}{\mathcal{P}})=1$ for all dyadic spots $\mathcal{P}$, if and only if 
    $(x_1+y_1e(D))+(x_2+y_2e(D))e(A(D+Be(D)))$, $(x_1+y_1e(D))+(x_2+y_2e(D))e(A(D-Be(D)))$,
    $(x_1-y_1e(D))+(x_2-y_2e(D))e(A(D+Be(D)))$ and $(x_1-y_1e(D))+(x_2-y_2e(D))e(A(D-Be(D)))$
    are all $\equiv1\pmod4$, or $\equiv2\pmod8$.
\end{lemma}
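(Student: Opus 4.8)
The plan is to reduce the four Hilbert symbols to four sum-of-two-squares tests inside $\mathbb{Q}_2$, and then to read those tests off from the four displayed expressions. First I would record the geometry of the dyadic spots. Since $K\in SS(2)$ we have $D\equiv1\pmod8$, so $\sqrt{D}\in\mathbb{Q}_2$; and the hypothesis $A(D+Be(D))=t^2T$ with $T\equiv1\pmod8$ says $A(D+B\sqrt{D})\in\mathbb{Q}_2^{*2}$, so $\theta\in\mathbb{Q}_2$, and likewise $A(D-B\sqrt{D})\in\mathbb{Q}_2^{*2}$ after applying $\sigma$. Hence each dyadic completion satisfies $K_{\mathcal{P}}\simeq\mathbb{Q}_2$, and the four dyadic spots correspond exactly to the four embeddings $K\hookrightarrow\mathbb{Q}_2$ given by the sign choices $\sqrt{D}\mapsto\pm e(D)$ together with a choice of square root of $A(D\pm Be(D))$. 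Tracking $m=X+Y\theta$ through these four embeddings, and using $e(w)\equiv\sqrt{w}\pmod{64}$, produces precisely the four quantities in the statement; I would verify this bookkeeping once, observing that the four quantities are simply the images of the Galois conjugates $m,\sigma m,\sigma^2m,\sigma^3m$ under a single fixed embedding.

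Next I would reduce each Hilbert symbol to a congruence. Because $K_{\mathcal{P}}\simeq\mathbb{Q}_2$ and $\sqrt{-1}\notin\mathbb{Q}_2$, for each dyadic $\mathcal{P}$ we have $\left(\frac{-1,m}{\mathcal{P}}\right)=1$ if and only if the image of $m$ in $\mathbb{Q}_2$ is a sum of two squares, i.e. $(-1,\,\iota_{\mathcal{P}}(m))_2=1$. A direct evaluation of the dyadic Hilbert symbol (equivalently the unit criterion of Lemma 1.2 specialised to $K_{\mathcal{P}}=\mathbb{Q}_2$) gives $(-1,2)_2=1$ and $(-1,u)_2=(-1)^{(u-1)/2}$ for a unit $u$, so a nonzero $w\in\mathbb{Q}_2$ is a sum of two squares exactly when its unit part is $\equiv1\pmod4$. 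When $v_2(w)\in\{0,1\}$ this is precisely the dichotomy $w\equiv1\pmod4$ or $w\equiv2\pmod8$ appearing in the statement. Applying this to each of the four images and intersecting the four conditions then yields the claimed biconditional.

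The step I expect to be the main obstacle is the valuation bookkeeping that legitimises the mod $4$ / mod $8$ test, since that test tacitly assumes every conjugate image has $2$-adic valuation $0$ or $1$. Reducing modulo $2$, each of the four images is congruent to $x_1+x_2+y_1+y_2\pmod2$, so the four share a common parity: either all are units, where the test is simply $\equiv1\pmod4$, or all are even. In the even case the delicate point is the situation flagged before the statement, where the $x_i,y_i$ force a factor of $2$ into $m$; there one must pass to $m/2$, check it is a genuine unit, and confirm that the class of $m/2$ modulo $8$ — equivalently $m\equiv2\pmod8$ — is faithfully recovered from the $e(\cdot)$ data. The approximation $e(w)\equiv\sqrt{w}\pmod{64}$ is exactly what guarantees that the relevant sums and products are pinned down to the power of $2$ needed to decide the classes mod $4$ and mod $8$, so I would carry the four images to that precision and, crucially, establish that after extracting the common factor of $2$ the surviving valuations genuinely fall in $\{0,1\}$. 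This precision-versus-valuation accounting is the only technical part; the rest is the structural reduction above.
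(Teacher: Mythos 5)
Your plan coincides with the paper's own (largely implicit) argument for this lemma: the paper likewise identifies all four dyadic completions in the $SS(2)$ case with $\mathbb{Q}_2$, takes the four images of $m$ under the embeddings determined by $\sqrt{D}\mapsto\pm e(D)$ and the two roots $e(A(D\pm Be(D)))$ (equivalently, the images of the conjugates $m,\sigma m,\sigma^2m,\sigma^3m$ under one fixed embedding, computed to the precision $e(w)\equiv\sqrt{w}\pmod{64}$), and then applies the unit criterion of Lemma 1.2 to obtain exactly the $\equiv1\pmod4$ / $\equiv2\pmod8$ dichotomy. The ``main obstacle'' you flag --- verifying that each conjugate image has $2$-adic valuation $0$ or $1$, possibly after extracting a common factor of $2$ --- is precisely the point the paper settles only by its parenthetical remark preceding the lemma (the case where all $x_i,y_i$ are odd and $m$ is ``a twice''), so your proposal is faithful to, and no less complete than, the paper's treatment.
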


Next we analyze SR(2) fields.

\begin{lemma}
    If $D\equiv 1\pmod8$ and $l\neq0$,
    then $(\frac{-1,m}{\mathcal{P}})=1$ for both dyadic spots $\mathcal{P}$, 
    if and only if $2|m-1$ if $2\nmid m$,
    and $2|\frac{m}{2}-1$ otherwise.
\end{lemma}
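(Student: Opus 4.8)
The plan is to first fix the local picture. Since $D\equiv1\pmod8$, the prime $2$ splits in $k=\mathbb{Q}(\sqrt D)$ as $\mathfrak{p}_1\mathfrak{p}_2$ with $k_{\mathfrak{p}_i}\simeq\mathbb{Q}_2$, and because $K/\mathbb{Q}$ is Galois the two primes are interchanged by $\sigma$ and therefore behave identically in $K$. The hypothesis $l\neq0$ means, by the relative discriminant computation above, that $2\mid\Delta_{K/k}$, so $K/k$ is ramified over $2$; hence both $\mathfrak{p}_i$ ramify, $K\in SR(2)$, and there are exactly two dyadic spots $\mathcal{P}_1,\mathcal{P}_2$, each with $K_{\mathcal{P}_i}\simeq\mathbb{Q}_2(\sqrt{c_i})$ a ramified quadratic extension, where $c_1=A(D+Be(D))$ and $c_2=A(D-Be(D))$ lie in a square class different from that of $5$. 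In particular $v_{\mathcal{P}_i}(2)=2$ and the residue field is $\mathbb{F}_2$.

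Next I would rewrite the Hilbert condition as $(\frac{-1,m}{\mathcal{P}_i})=1\iff m$ is a sum of two squares in $K_{\mathcal{P}_i}$, and denote by $S_i$ the group of such sums. Two observations organize the argument. First, $2=1^2+1^2\in S_i$, so $m\in S_i\iff\tfrac m2\in S_i$ whenever $\tfrac m2$ is available. Second, since $c_i$ is not the unramified class, Lemma 1.2 applies and gives, for a unit $h\in O_{K_{\mathcal{P}_i}}$, that $h\in S_i\iff2\mid h-1$ (equivalently $h\equiv1\pmod{\mathcal{P}_i^{2}}$, using $v_{\mathcal{P}_i}(2)=2$).

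The technical core is to read off $v_{\mathcal{P}_i}(m)$ from the expansion $m=X+Y\theta$, $X=x_1+x_2\sqrt D$, $Y=y_1+y_2\sqrt D$. Under $\mathcal{P}_i$ one has $\sqrt D\mapsto\pm e(D)$ and $\theta\mapsto\sqrt{c_i}$, so $m\mapsto(x_1\pm x_2e(D))+(y_1\pm y_2e(D))\sqrt{c_i}$ with both coordinates in $\mathbb{Z}_2$. The identity $c_1c_2=A^2(D^2-B^2D)=A^2DC^2$, coming from $D=B^2+C^2$, controls $v_2(c_1),v_2(c_2)$ and hence $v_{\mathcal{P}_i}(\theta)=v_2(c_i)$: when $l=2$ ($B$ even) both $c_i$ are odd and $\theta$ is a unit, while when $l=3$ ($B$ odd) the $c_i$ are even and $\theta$ has positive valuation. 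Using the residue field $\mathbb{F}_2$ together with the coprimality $(x_1,x_2,y_1,y_2)=1$, I would show that $m$ is a unit exactly when $2\nmid m$, and that otherwise $m$ is $2$ times a unit, so $\tfrac m2$ is again a unit. Combining with the two facts: if $2\nmid m$ then $(\frac{-1,m}{\mathcal{P}_i})=1\iff m\in S_i\iff2\mid m-1$; if $2\mid m$ then $m=2\cdot\tfrac m2$ with $\tfrac m2$ a unit and $2\in S_i$, so $(\frac{-1,m}{\mathcal{P}_i})=1\iff\tfrac m2\in S_i\iff2\mid\tfrac m2-1$.

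Finally, both spots yield the same congruence: $\sigma$ swaps $\mathcal{P}_1$ and $\mathcal{P}_2$ while fixing $2$ and $-1$, and the conditions $2\mid m-1$, $2\mid\tfrac m2-1$ are insensitive to which spot one reduces at, so the single stated criterion governs $(\frac{-1,m}{\mathcal{P}})=1$ at both dyadic places at once. The step I expect to be the main obstacle is the valuation bookkeeping of the third paragraph—pinning down $v_{\mathcal{P}_i}(\theta)$ in the $l=3$ case from $c_1c_2=A^2DC^2$ and verifying, via coprimality, that the even case of $m$ is always exactly $2$ times a unit (so that $\tfrac m2-1$ is meaningful). This is precisely the configuration where an odd value of $v_{\mathcal{P}_i}(m)$ would otherwise slip through and force a Hilbert symbol of $-1$, so closing it off is what makes the clean dichotomy correct.
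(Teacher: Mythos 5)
Your route is the same as the paper's---the paper's entire proof is ``Directly obtained from Lemma 1.2''---so what you have added is precisely the bookkeeping the paper omits: the identification of the two ramified dyadic completions, the translation of the Hilbert symbol into sums of two squares, the observation $2=1^2+1^2$, and a reduction of general $m$ to the unit case of Lemma 1.2. The problem is that the reduction step, the one you yourself flag as the main obstacle, is false. The dyadic spots of $K$ lie over the \emph{split} prime $2$ of $k$, so under $\mathcal{P}_i$ the coordinates $x_1+x_2\sqrt D$ and $y_1+y_2\sqrt D$ become the $2$-adic numbers $x_1\pm x_2e(D)$ and $y_1\pm y_2e(D)$, and coprimality of $(x_1,x_2,y_1,y_2)$ puts no bound whatever on their valuations (for $D=17$ one has $e(D)=23$, so already $1+e(D)=24$ is divisible by $8$). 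In particular the dichotomy ``$m$ is a unit when $2\nmid m$, and $2$ times a unit otherwise'' fails for the simplest elements: when $B$ is odd ($l=3$), $m=\theta$ (coefficients $(0,0,1,0)$) has $v_{\mathcal{P}_i}(\theta)=v_2\bigl(A(D\pm Be(D))\bigr)$, and these two numbers are \emph{odd}---one equals $1$ and the other $2v_2(C)-1$, since their sum is $v_2(A^2DC^2)=2v_2(C)$ while $v_2(c_1+c_2)=v_2(2AD)=1$; when $B$ is even ($l=2$), $m=1+\theta$ has $v_{\mathcal{P}_i}(m)=1$ at both spots because $N(1+\theta)=1-c_i\equiv2\pmod 4$. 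For such $m$ neither $m$ nor $m/2$ is a unit, so Lemma 1.2 never applies and your argument stops.

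Moreover the gap cannot be closed, because the lemma as stated is false exactly on these odd-valuation elements. Take $A=3$, $B=1$, $C=4$, $D=17$, so $K=\mathbb{Q}(\sqrt{3(17+\sqrt{17})})$ satisfies $D\equiv1\pmod 8$ and $l=3$, and take $m=\theta$. With $\sqrt{17}\equiv23\pmod{64}$ in $\mathbb{Z}_2$ one finds $c_1=3(17+\sqrt{17})=8u_1$ and $c_2=3(17-\sqrt{17})=2u_2$ with $u_1,u_2\equiv7\pmod 8$. By functoriality of the norm residue symbol ($(a,b)_E=(a,N_{E/F}(b))_F$ for $a\in F$, $b\in E$), we get $(\frac{-1,\theta}{\mathcal{P}_i})=(-1,\,N_{K_{\mathcal{P}_i}/\mathbb{Q}_2}(\theta))_{\mathbb{Q}_2}=(-1,-c_i)_{\mathbb{Q}_2}$; since $-c_1=8(-u_1)$ and $-c_2=2(-u_2)$ with $-u_i\equiv1\pmod 8$, both symbols equal $+1$. (Equivalently: $\sqrt{14}$ \emph{is} a sum of two squares in $\mathbb{Q}_2(\sqrt{14})$, so odd valuation does not force the symbol to be $-1$.) Yet $2\nmid\theta$, because $v_{\mathcal{P}_2}(\theta)=1<2=v_{\mathcal{P}_2}(2)$, and $2\nmid\theta-1$, because $\theta-1$ is a unit; so the stated criterion fails while both symbols are $1$. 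Any correct criterion must treat odd $v_{\mathcal{P}_i}(m)$ separately, where the symbol is decided by the unit part of $m$ rather than by a congruence on $m$ or $m/2$. A second, independent defect you inherit from the paper affects your $l=2$ case: Lemma 1.2 is itself wrong for $c\equiv3\pmod 4$, since then $\mathbb{Q}_2(\sqrt c,\sqrt{-1})/\mathbb{Q}_2(\sqrt c)$ is unramified (or trivial when $c\equiv7\pmod 8$), so \emph{every} unit is a sum of two squares---for instance $x^2+y^2=\sqrt3$ is solvable in $\mathbb{Q}_2(\sqrt3)$ even though $2\nmid\sqrt3-1$. So your instinct about where the danger lies was exactly right, but the coprimality hypothesis cannot avert it, and the statement itself needs to be corrected before a proof is possible.
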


\begin{proof}
    Directly obtained from Lemma 1.2.
\end{proof}

Next we analyze SI(2) fields. First we calculate $\sqrt{N}\pmod{16}$ in $K_\mathcal{P}\simeq\mathbb{Q}_2(\sqrt{5})$
where $N\equiv5\pmod8$ square-free.

\begin{lemma}
    Define
    $$
    e(N)=\left\{
    \begin{array}{lll}
        \sqrt{5} &   & if\ N\equiv5\mod{32}\\ 
        \sqrt{5}(1+2^2+2^3) &   & if\ N\equiv13\mod{32}\\ 
        \sqrt{5}(1+2^3) &   & if\ N\equiv21\mod{32}\\ 
        \sqrt{5}(1+2^2)  &   & if\ N\equiv29\mod{32}\\ 
        s_1s_2\dots s_ge(W) &  & if\ w=s_1^2s_2^2\dots s_g^2W,W\ square-free
    \end{array}
\right.
    $$
    Then without loss of generality we can choose $\sqrt{N}\equiv e(N)\mod{16}$.
\end{lemma}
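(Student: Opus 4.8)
The plan is to reduce the computation of $\sqrt{N}$ inside the unramified quadratic extension $\mathbb{Q}_2(\sqrt{5})$ to an ordinary square-root computation in $\mathbb{Q}_2$, exactly paralleling the treatment of $e(w)$ for $w\equiv1\pmod8$. First I would dispose of the multiplicative (``square part'') clause: if $N=s_1^2\cdots s_g^2W$ with $W$ square-free, then every $s_i$ is odd (since $N$ is an odd unit), so $s:=s_1\cdots s_g$ is a $2$-adic unit and $W\equiv N\equiv5\pmod8$. Choosing $\sqrt{N}=s\sqrt{W}$ and multiplying the target congruence $\sqrt{W}\equiv e(W)\pmod{16}$ by the unit $s$ gives $\sqrt{N}\equiv s\,e(W)=e(N)\pmod{16}$. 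Hence it suffices to treat square-free $N$, in which case $N\bmod 32\in\{5,13,21,29\}$, the four classes lying above $5\bmod 8$.

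Next I would note that $\mathbb{Q}_2(\sqrt{N})=\mathbb{Q}_2(\sqrt{5})$ and write $\sqrt{N}=r\sqrt{5}$ with $r\in\mathbb{Q}_2$. Since $5^{-1}\equiv5\pmod8$, we have $N/5\equiv25\equiv1\pmod8$, so $N/5$ is a square in $\mathbb{Z}_2$ and $r:=\sqrt{N/5}$ exists. The two sign choices for $r$ correspond to the two roots $\pm\sqrt{N}$, and the phrase ``without loss of generality'' in the statement refers precisely to fixing this sign, just as in the remark following the definition of $e(w)$. Because $\sqrt{5}$ is a unit in $O_{\mathbb{Q}_2(\sqrt{5})}$, a congruence $r\equiv r_0\pmod{16}$ in $\mathbb{Z}_2$ is preserved after multiplying by $\sqrt{5}$, so it will be enough to compute $r\bmod 16$.

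The heart of the argument is that $N\bmod 32$ determines $r\bmod 16$ up to sign. Using $5^{-1}\equiv13\pmod{32}$ one finds $N/5\equiv1,9,17,25\pmod{32}$ according as $N\equiv5,13,21,29\pmod{32}$. The key finite lemma is that for odd $r,r'$, $r^2\equiv r'^2\pmod{32}$ forces $r\equiv\pm r'\pmod{16}$: both $r-r'$ and $r+r'$ are even, and their sum $2r\equiv2\pmod4$ shows exactly one of them has $2$-adic valuation $1$ while the other has valuation $\ge4$, so $16\mid r-r'$ or $16\mid r+r'$. Consequently $r^2=N/5\bmod 32$ pins down $r\bmod 16$ up to sign, and the table of odd squares modulo $32$ (with fibres $\{1,15\},\{3,13\},\{5,11\},\{7,9\}$ over $1,9,25,17$) shows that the possible values of $r\bmod 16$ are $\{1,15\},\{3,13\},\{7,9\},\{5,11\}$ in the four cases. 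Selecting the representative recorded in the definition of $e$, namely $1,13,9,5$ (equal to $e(N)/\sqrt{5}$), fixes the sign, and multiplying through by $\sqrt{5}$ yields $\sqrt{N}=r\sqrt{5}\equiv e(N)\pmod{16}$.

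The main obstacle I anticipate is purely organizational rather than conceptual: verifying that the four chosen representatives $1,13,9,5$ (rather than their negatives $15,3,7,11$) are mutually consistent with a single coherent choice of $\sqrt{N}$ across the cases, and checking that the precision ``mod $32$ determines mod $16$'' is exactly what the claimed accuracy requires, with no loss incurred from the unit factor $\sqrt{5}$. Once the sign bookkeeping is set up, the remainder is a short finite verification.
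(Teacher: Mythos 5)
Your proof is correct and is essentially the paper's own argument: both rest on the same difference-of-squares valuation count (two radicands congruent mod $32$ force their square roots to agree mod $16$ after a sign choice, because one of the factors $\sqrt{N'}\pm\sqrt{N}$ has $2$-adic valuation exactly $1$), followed by a finite verification of the four residues $5,13,21,29\pmod{32}$. The only difference is cosmetic: you factor out the unit $\sqrt{5}$ and run the computation with $r=\sqrt{N/5}\in\mathbb{Z}_2$ and the table of odd squares mod $32$, while the paper does the identical computation directly in $\mathbb{Q}_2(\sqrt{5})$, comparing $\sqrt{N}$ with $c\sqrt{5}$ (e.g. its check $21-(9\sqrt{5})^2=-384$ is exactly your check $9^2\equiv 21\cdot 5^{-1}\pmod{32}$).
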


\begin{proof}
    Note that $\sqrt{N}\equiv\sqrt{5}\mod{2}$ since $(\sqrt{N})^2$ is a square, and
    for $L$ an integer, $(\sqrt{32L+N}-\sqrt{N})(\sqrt{32L+N}+\sqrt{N})$ is divided by 32,
    with one of the factor is exactly divided by $2^1$. So we can choose $16|\sqrt{32L+N}-\sqrt{N}$. 
    Since $2^5||(\sqrt{21}-9\sqrt{5})(\sqrt{21}+9\sqrt{5})$ we choose $\sqrt{21}\equiv9\sqrt{5}\pmod{32}$.
    The rest of proof is similar.
\end{proof}

Combining Lemma 2.7 and 1.2 we obtain that
\begin{lemma}
    If $D\equiv 1\pmod8$ and $A(D+Be(D))$ is a power of 2 multiplying $N\equiv5\pmod8$,
    then $(\frac{-1,m}{\mathcal{P}})$ for dyadic spots $\mathcal{P}$, if and only if the 2-free part of
    $x_1+y_1e(D)+(x_2+y_2e(D))e(A(D+Be(D)))$ is $\equiv1,3,\frac{\pm3+\sqrt{5}}{2}\pmod4$.
\end{lemma}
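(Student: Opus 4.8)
The plan is to follow the same template as the preceding SS(2) and SR(2) cases: identify the dyadic completions explicitly, translate each Hilbert symbol $(\frac{-1,m}{\mathcal P})$ into the question of whether $m$ is a sum of two squares in $K_{\mathcal P}$, and then apply the unit criterion of Lemma 1.3. Since $D\equiv1\pmod8$, the prime $2$ splits in $k$ as $\mathfrak p_1\mathfrak p_2$ with $k_{\mathfrak p_i}\simeq\mathbb Q_2$; the hypothesis that $A(D+Be(D))$ is a power of $2$ times $N\equiv5\pmod8$ forces $\theta^2$ into the square class $5$ of $\mathbb Q_2^*$, so each $\mathfrak p_i$ is inert in $K$ and $K_{\mathcal P_i}\simeq\mathbb Q_2(\sqrt5)$, the unramified quadratic extension. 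First I would record that $-1$ is a non-square in $\mathbb Q_2(\sqrt5)$: as $-1\equiv7\pmod8$ the extension $\mathbb Q_2(\sqrt{-1})$ is ramified, hence distinct from $\mathbb Q_2(\sqrt5)$, so $(\frac{-1,m}{\mathcal P})=1$ really is the nontrivial assertion that $m$ is a sum of two squares in $\mathbb Q_2(\sqrt5)$.

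Next I would reduce to the unit (2-free) part. Because $2=1^2+1^2$ and sums of two squares are closed under multiplication \emph{and} division by the Brahmagupta--Fibonacci identity, writing $m=2^su$ with $u$ a unit shows $m$ is a sum of two squares iff $u$ is. Thus it suffices to compute the image of $m$ in $\mathbb Q_2(\sqrt5)$ to high $2$-adic precision and strip its $2$-part. Substituting $\sqrt D\mapsto e(D)$ (via the recorded congruence $\sqrt w\equiv e(w)\pmod{64}$ for $w\equiv1\pmod8$) and $\theta\mapsto e(A(D+Be(D)))$ (via Lemma 2.7 applied to $\sqrt N$) into $m=x_1+x_2\sqrt D+(y_1+y_2\sqrt D)\theta$, the image of $m$ at the spot determined by $\sqrt D\mapsto e(D)$ reduces modulo $16$ to the combination displayed in the statement; passing to its $2$-free part modulo $4$, Lemma 1.3 (case $c=5$) says $u$ is a sum of two squares exactly when $u\equiv1,3,\frac{\pm3\pm\sqrt5}{2}\pmod4$. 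This settles the criterion at one dyadic spot.

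The second dyadic spot corresponds to the conjugate embedding $\sqrt D\mapsto-e(D)$. Because $\sqrt{A(D-Be(D))}\,\sqrt{A(D+Be(D))}=A^2DC^2\in(\mathbb Q_2^*)^2$ (using $D=B^2+C^2$ and that $D$ is a square in $\mathbb Q_2$), the two square roots differ by a scalar $\lambda\in\mathbb Q_2^*$, so the second completion is again $\mathbb Q_2(\sqrt5)$ and the $\mathcal P_2$-image of $m$ is the $\mathcal P_1$-image of $\sigma(m)$; equivalently it is obtained from the first expression by $(x_2,y_2)\mapsto(-x_2,-y_2)$ together with $\sqrt5\mapsto-\sqrt5$. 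Galois-equivariance of the Hilbert symbol gives $(\frac{-1,m}{\mathcal P_2})=(\frac{-1,\sigma m}{\mathcal P_1})$, so a priori both spots must be tested. I would then argue that the single displayed membership suffices: the automorphism $\tau\colon\sqrt5\mapsto-\sqrt5$ fixes $-1$ and hence preserves the property of being a sum of two squares, and it carries $\frac{\pm3+\sqrt5}{2}$ to $\frac{\pm3-\sqrt5}{2}$. Tracking the sign conventions built into $e(D)$ and $e(N)$ then identifies the $\mathcal P_2$-condition with the $\tau$-conjugate of the $\mathcal P_1$-condition, collapsing the six-element list of Lemma 1.3 to the four residues $1,3,\frac{\pm3+\sqrt5}{2}$ recorded here.

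The step I expect to be the main obstacle is exactly this last reconciliation: verifying that the two conjugate dyadic completions are jointly captured by one mod-$4$ membership, i.e. pinning down precisely the signs hidden in $e(D)$ and $e(A(D+Be(D)))$ so that the reduced list $\{1,3,\frac{\pm3+\sqrt5}{2}\}$ is correct and is not merely one half of the $\tau$-symmetric list in Lemma 1.3. The remaining steps---the identification $K_{\mathcal P}\simeq\mathbb Q_2(\sqrt5)$, the reduction to the unit part, and the application of Lemma 1.3---are routine given the earlier results.
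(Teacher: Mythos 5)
Your opening steps match the paper's intent exactly: the paper's entire proof is the one line ``combining Lemma 2.7 and 1.2,'' and your identification of both dyadic completions as $\mathbb{Q}_2(\sqrt{5})$, the reduction to the $2$-free part (sums of two squares form a group containing $2=1^2+1^2$), and the evaluation of the spot $\sqrt{D}\mapsto e(D)$ via Lemma 2.7 plus the unit criterion of Lemma 1.2 are all sound. The genuine gap is the step you yourself flag as the main obstacle, and it cannot be closed. Your proposed mechanism is self-defeating: if the $\mathcal{P}_2$-condition were the $\tau$-conjugate of the $\mathcal{P}_1$-condition ($\tau\colon\sqrt5\mapsto-\sqrt5$), then, precisely because $\tau$ preserves sums of two squares, the two conditions would be \emph{equivalent}, and the combined criterion would be the $\tau$-stable six-element list of Lemma 1.2 --- never a four-element sublist that is not $\tau$-stable. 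In fact the $\mathcal{P}_2$-image of $m$ is the $\mathcal{P}_1$-image of $\sigma(m)$; it involves $x_1-x_2e(D)$, $y_1-y_2e(D)$ and a square root of $A(D-Be(D))$, which is information mod $16$ genuinely independent of the displayed expression, so \emph{no} list can make a single-expression criterion correct.

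Concretely, take $A=1$, $D=17$, $B=4$, $C=1$, so $A(D+Be(D))=17+4\cdot 23=109\equiv 5\pmod 8$ and $K=\mathbb{Q}(\sqrt{17+4\sqrt{17}})$ is an SI(2) field. Compare $m=3+8\sqrt{17}+5\theta$ with $m'=3+8\sqrt{17}+(-18+\sqrt{17})\theta$: both yield the \emph{identical} expression $187+65\sqrt5$ (substituting $e(17)=23$, $e(109)=13\sqrt5$), whose $2$-free part is $\equiv\frac{3+\sqrt5}{2}\pmod 4$, in the stated list. But using $(\frac{-1,m}{\mathcal{P}})=\bigl(N_{K_{\mathcal{P}}/\mathbb{Q}_2}(m),-1\bigr)_{\mathbb{Q}_2}$ and evaluating $N_{K/k}$ at the two dyadic places $\sqrt{17}\mapsto\pm d$, $d\equiv 23\pmod{256}$: for $m$, $N_{K/k}(m)=672-52\sqrt{17}$ gives $-4\cdot 131$ and $4\cdot 467$ with $131\equiv 467\equiv 3\pmod 4$, i.e.\ symbols $(+1,-1)$; for $m'$, $N_{K/k}(m')=-2252-704\sqrt{17}$ gives $-4\cdot 4611$ and $4\cdot 3485$ with $4611\equiv 3$, $3485\equiv 1\pmod 4$, i.e.\ symbols $(+1,+1)$. (The $(+1,-1)$ pattern for $m$ is confirmed independently by reciprocity: $N_{K/\mathbb{Q}}(m)=2^4\cdot 101\cdot 251$, the prime $251\equiv3\pmod4$ splits completely and contributes one $-1$, the infinite places contribute two.) So elements indistinguishable by the lemma's expression have different answers, which means the statement itself is defective, not just your proof of it: the correct criterion must test \emph{both} expressions $x_1\pm x_2e(D)+(y_1\pm y_2e(D))\cdot e(A(D\pm Be(D)))$ against the six-element list $1,3,\frac{\pm3\pm\sqrt5}{2}$ (as Lemma 2.5 does in the SS(2) case, and as the list in Theorem 2.9(3-3), which contradicts this lemma's four-element list, already suggests), or else invoke Hilbert reciprocity in the context of Theorem 2.9 to reduce to one spot --- but again with the six-element list.
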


We conclude the algorithm determining whether $m$ in $K$ is a sum of two squares or not:

\begin{theorem}
    Let $K$ be a cyclic quartic field in Theorem 0.1, 
    $m=X+Y\theta$, $X=x_1+x_2\sqrt{D}$, $Y=y_1+y_2\sqrt{D}$, $x_1,x_2,y_1,y_2\in\mathbb{Z}$,
    $(x_1,x_2,y_1,y_2)=1$
    
    Then the necessary and sufficient condition that $m$ is a sum of two squares, is

    (1)$m$ is totally positive if $A>0$;

    (2)For every prime $p\equiv3\pmod4$ dividing $N_{K/\mathbb{Q}}(m)$, the following non-dyadic conditions hold:

    (2-1)For $p|D$, $p|y_1, p\nmid x_2$.

    (2-2)For $(\frac{D}{p})=1$ and $(\frac{A(D+Bc)}{p})=1$ where $c$ is an integer that $c^2\equiv D\pmod p$,
    condition $[m,p]$ holds (recall Lemma 2.3).

    (2-3)For $(\frac{D}{p})=1$ and $p|A$,  
    $p\nmid m^2$ and $v_p(N_{K/\mathbb{Q}}(m))$ is even.

    (3)The following dyadic conditions hold:

    (3-1)For $D\equiv 1\pmod8$ and $A(D+Be(D))=t^2T$ with $T$ square-free and $T\equiv 1\pmod8$, 
    $(x_1+y_1e(D))+(x_2+y_2e(D))e(A(D+Be(D)))$, $(x_1+y_1e(D))+(x_2+y_2e(D))e(A(D-Be(D)))$,
    $(x_1-y_1e(D))+(x_2-y_2e(D))e(A(D+Be(D)))$ and $(x_1-y_1e(D))+(x_2-y_2e(D))e(A(D-Be(D)))$
    are all $\equiv1\pmod4$, or $\equiv2\pmod8$.

    (3-2)For $D\equiv 1\pmod8$ and $l\neq0$,
    $2|m-1$ if $2\nmid D$ and $2\nmid m$, and $2|\frac{m}{2}-1$ otherwise.

    (3-3)For $D\equiv 1\pmod8$ and $A(D+Be(D))=t^2T$ with $T$ square-free and $T\equiv 5\pmod8$,
    the 2-free part of
    $x_1+y_1e(D)+(x_2+y_2e(D))e(A(D+Be(D)))$ is $\equiv1,3,\frac{\pm3\pm\sqrt{5}}{2}\pmod4$.

\end{theorem}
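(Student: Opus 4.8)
The plan is to prove the theorem purely by assembling the local analyses carried out in Sections 1 and 2, organized spot by spot. The starting point is Theorem 0.4 together with the remark following it: since $\sqrt{-1}\notin K$, the element $m\neq 0$ is a sum of two squares in $K$ if and only if the ternary form $x^2+y^2-mz^2$ is isotropic at every spot $\mathcal{P}$ of $K$, which in Hilbert-symbol language is the statement that $\left(\frac{-1,m}{\mathcal{P}}\right)=1$ for all $\mathcal{P}$. Thus the whole theorem reduces to the claim that conditions (1)--(3) are jointly equivalent to the vanishing of all these Hilbert symbols, and I would prove this by exhibiting, for each family of spots, exactly which listed condition controls it.

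First I would dispose of the spots that impose no constraint, so that only the listed conditions remain. At the archimedean spots: if $A<0$ the field is totally imaginary, $-1$ is a square in each completion, and every symbol is automatically $1$; if $A>0$ the field is totally real, and isotropy of $x^2+y^2-mz^2$ at a real place is equivalent to positivity of the corresponding conjugate of $m$, giving exactly condition (1). Among the finite odd spots, those over $p\equiv 1\pmod 4$ contribute nothing because $-1$ is already a square in $\mathbb{Q}_p\subseteq K_{\mathcal{P}}$, and those over $p\equiv 3\pmod 4$ with $p\nmid N_{K/\mathbb{Q}}(m)$ contribute nothing because $m$ is then a unit and the non-dyadic Hilbert symbol of two units vanishes (the reduction recorded via [2, 63:12]). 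This isolates the odd primes $p\equiv 3\pmod 4$ with $p\mid N_{K/\mathbb{Q}}(m)$ together with the dyadic spots as the only places where a genuine condition can arise.

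Next I would run through the splitting types of each such odd prime, matching each to the appropriate lemma. For $(\frac{D}{p})=-1$ (types II and IR) Lemma 2.2 shows the symbol is always $1$; for the SI case the completion is $\mathbb{Q}_p(\sqrt{-1})$ and again the symbol is $1$; so these impose nothing. The three remaining types supply the three non-dyadic conditions: $p\mid D$ (type RR) gives (2-1) via Lemma 2.1, after noting that $p\mid N_{K/\mathbb{Q}}(m)$ already forces $p\mid x_1$ so that only $p\mid y_1$, $p\nmid x_2$ survives; $(\frac{D}{p})=1$ with $(\frac{A(D+Bc)}{p})=1$ (type SS) gives condition $[m,p]$, i.e. (2-2), via Lemma 2.3; and $(\frac{D}{p})=1$ with $p\mid A$ (type SR) gives (2-3) via Lemma 2.4. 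For the dyadic spots I would first treat the case $D\not\equiv 1\pmod 8$, where $2$ does not split in $k$ and $K$ has a single dyadic spot: here I invoke Hilbert Reciprocity, since once conditions (1)--(2) force every non-dyadic symbol to be $1$, the product formula pins the lone remaining symbol to $1$ as well, so no explicit condition is needed. When $D\equiv 1\pmod 8$ the prime $2$ splits in $k$ and each completion has degree at most $2$ over $\mathbb{Q}_2$; the three subtypes then yield (3-1) (type SS, Lemma 2.5), (3-2) (type SR, Lemma 2.6), and (3-3) (type SI, Lemma 2.8).

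The step I expect to be the main obstacle is verifying that the case division above is genuinely exhaustive and mutually exclusive, so that every spot of $K$ is accounted for exactly once and the ``no condition'' cases are truly unconditional. In particular the Hilbert Reciprocity argument for the unique dyadic spot is the only non-mechanical ingredient: it is valid only if one has first confirmed that conditions (1) and (2), together with the vanishing of the symbols at the $p\equiv 1\pmod 4$ and the $p\nmid N_{K/\mathbb{Q}}(m)$ spots, force the product of all non-dyadic symbols to equal $1$. Once the bookkeeping of types against Theorems 0.2 and 0.3 and the normalizing assumption $(x_1,x_2,y_1,y_2)=1$ is checked to be complete, the equivalence asserted in the theorem follows immediately from the cited lemmas.
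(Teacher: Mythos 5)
Your proposal is correct and follows essentially the same route as the paper: Section 2 of the paper \emph{is} the proof of this theorem, namely the reduction via Theorem 0.4 and the remark that $\sqrt{-1}\notin K$ to checking $\left(\frac{-1,m}{\mathcal{P}}\right)=1$ at every spot, the elimination of archimedean spots, spots over $p\equiv1\pmod4$, and spots with $p\nmid N_{K/\mathbb{Q}}(m)$, the matching of the six splitting types to Lemmas 2.1--2.4 for odd $p\equiv3\pmod 4$, and the dyadic analysis via Hilbert reciprocity (lone dyadic spot) or Lemmas 2.5, 2.6, 2.8 (when $D\equiv1\pmod 8$). Your explicit caveat that the reciprocity step for the unique dyadic spot is conditional on all other symbols being $1$ is in fact stated more carefully than in the paper, but it is the same argument.
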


Finally, to discuss general cases, we need to start from the values of $(\frac{-1,P}{\mathcal{P}})$,
where $P=p_1p_2\dots p_\alpha$ the product of some distinct prime integers $\equiv3\pmod4$,
and $\mathcal{P}$ an arbitrary finite place with $p\equiv3\pmod4$.

If $K\in RR(p)$, then $v_\mathcal{P}(P)$ is always 0 or 4, making $(\frac{-1,P}{\mathcal{P}})$ always 1
where $\mathcal{P}|p$.

If $K\in IR(p)$, then $v_\mathcal{P}(P)$ is always 0 or 2, making $(\frac{-1,P}{\mathcal{P}})$ always 1
where $\mathcal{P}|p$.

If $K\in II(p)$, then $k_{\mathfrak{p}}\simeq\mathbb{Q}_p(\sqrt{-1})$ since $(\frac{-1}{p})=-1$,
thus -1 is already a square in $k_{\mathfrak{p}}$, hence $(\frac{-1,P}{\mathcal{P}})$ is always 1.

If $K\in SS(p)$, then $(\frac{-1,P}{\mathcal{P}})$ are all -1 if $p|P$, and 1 if not, where $\mathcal{P}|p$.

If $K\in SR(p)$, then $v_\mathcal{P}(P)$ is always 0 or 2, making $(\frac{-1,P}{\mathcal{P}})$ always 1
where $\mathcal{P}|p$.

If $K\in SI(p)$, then $K_{\mathcal{P}}\simeq\mathbb{Q}_p(\sqrt{-1})$ since $(\frac{-1}{p})=-1$,
thus $-1$ is already a square in $K_{\mathcal{P}}$, hence $(\frac{-1,P}{\mathcal{P}})$ is always 1.

If $p\nmid N_{K/\mathbb{Q}}(m)$ but $p|P$, then $(\frac{-1,P}{\mathcal{P}})=1$ if and only if 
$pO_K$ ramifies, i.e., $p|AD$, which implies $p|A$ since $p\equiv3\pmod4$.

Then discuss $(\frac{-1,P}{\mathcal{P}_2})$ if $P_2$ is dyadic. We will use Lemma 1.1 and 1.2.

If $2O_K$ ramifies, i.e., $l\neq0$ in Theorem 0.2, then by Lemma 1.2,  $(\frac{-1,P}{\mathcal{P}_2})=1$.

If $K\in II(2)$, by Hilbert Reciprocity Law, $(\frac{-1,P}{\mathcal{P}_2})=
\displaystyle\prod_{\mathcal{P}|P}(\frac{-1,P}{\mathcal{P}})=\displaystyle\prod_{\mathcal{P}|P\ and\ K\in SS(p)}(\frac{-1,P}{\mathcal{P}})
=1$, a power of $(-1)^4$.

If $K\in SS(2)$, then $(\frac{-1,P}{\mathcal{P}_2})$ are all 1 if $P\equiv1\pmod4$, i.e, $\alpha$ is even, and $-1$ if not.

If $K\in SI(2)$, then $(\frac{-1,P}{\mathcal{P}_2})$ are both 1 by Lemma 1.2, since $K_{\mathcal{P}_2}\simeq\mathbb{Q}_2(\sqrt{5})$.

Concluding all above arguments, we obtain that 

\begin{theorem}
    Let $K$ be a cyclic quartic field in Theorem 0.1, 
    $M=\lambda^2PQm$, $0\neq\lambda\in\mathbb{Q}$, $P$(resp. $Q$) is the product of $\alpha$(resp. $\beta$) distinct 
    primes of 3$\pmod4$(resp. 1 or 2$\pmod4$), and 
    $m=X+Y\theta$, $X=x_1+x_2\sqrt{D}$, $Y=y_1+y_2\sqrt{D}$, $x_1,x_2,y_1,y_2\in\mathbb{Z}$,
    $(x_1,x_2,y_1,y_2)=1$.
    Then the necessary and sufficient condition that $M$ is a sum of two squares, is

    Then the necessary and sufficient condition that $m$ is a sum of two squares, is

    (1)$m$ is totally positive if $A>0$;

    (2)For every prime $p\equiv3\pmod4$ dividing $N_{K/\mathbb{Q}}(m)$, the following non-dyadic conditions hold:

    (2-1)For $p|D$, $p|y_1, p\nmid x_2$.

    (2-2)For $(\frac{D}{p})=1$ and $(\frac{A(D+Bc)}{p})=1$ where $c$ is an integer that $c^2\equiv D\pmod p$,
    condition $[m,p]$ holds (recall Lemma 2.3).

    (2-3)For $(\frac{D}{p})=1$ and $p|A$,  
    $p\nmid m^2$ and $v_p(N_{K/\mathbb{Q}}(m))$ is even.

    (3)The following dyadic conditions hold:

    (3-1)For $D\equiv 1\pmod8$ and $A(D+Be(D))=t^2T$ with $T$ square-free and $T\equiv 1\pmod8$, 
    $(x_1+y_1e(D))+(x_2+y_2e(D))e(A(D+Be(D)))$, $(x_1+y_1e(D))+(x_2+y_2e(D))e(A(D-Be(D)))$,
    $(x_1-y_1e(D))+(x_2-y_2e(D))e(A(D+Be(D)))$ and $(x_1-y_1e(D))+(x_2-y_2e(D))e(A(D-Be(D)))$
    are all $\equiv1\pmod4$, at most multiplying 2, with $\alpha$ even; or none of them is, with $\alpha$ odd.

    (3-2)For $D\equiv 1\pmod8$ and $l\neq0$,
    $2|m-1$ if $2\nmid D$ and $2\nmid m$, and $2|\frac{m}{2}-1$ otherwise.

    (3-3)For $D\equiv 1\pmod8$ and $A(D+Be(D))=t^2T$ with $T$ square-free and $T\equiv 5\pmod8$,
    the 2-free part of
    $x_1+y_1e(D)+(x_2+y_2e(D))e(A(D+Be(D)))$ is $\equiv1,3,\frac{\pm3+\sqrt{5}}{2}\pmod4$.

    (4)For odd prime $p\nmid N_{K/\mathbb{Q}}(m)$ but $p|P$, $p|A$.
\end{theorem}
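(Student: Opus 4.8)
The plan is to translate the assertion into Hilbert symbols and then assemble the place-by-place data already produced in Sections~1 and~2. By Theorem~0.4, and because $\sqrt{-1}\notin K$ forces any isotropic vector of $x^2+y^2-Mz^2$ to have nonzero $z$-component, $M$ is a sum of two squares in $K$ if and only if $(\frac{-1,M}{\mathcal{P}})=1$ at every place $\mathcal{P}$ of $K$. I would first discard the inessential factors. Since $\lambda^2$ is a square, $(\frac{-1,\lambda^2}{\mathcal{P}})=1$; and since each prime $q\mid Q$ is $\equiv 1,2\pmod 4$, it is already a sum of two squares in $\mathbb{Q}$, so $x^2+y^2-qz^2=0$ is isotropic over $\mathbb{Q}$ and hence over every $K_\mathcal{P}$, giving $(\frac{-1,q}{\mathcal{P}})=1$ and therefore $(\frac{-1,Q}{\mathcal{P}})=1$. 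Multiplicativity of the Hilbert symbol in its second argument then reduces the whole problem to deciding where the product $(\frac{-1,P}{\mathcal{P}})(\frac{-1,m}{\mathcal{P}})$ can fail to be $1$.

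Next I would run through the places. At the archimedean places isotropy is governed by signs; since $\lambda^2,P,Q>0$ the sign of $M$ under each real embedding agrees with that of $m$, so for $A>0$ (the real case) the condition is total positivity of $m$, which is (1), while for $A<0$ the places are complex and impose nothing. At a finite place over $p\equiv 1\pmod 4$ we have $\sqrt{-1}\in\mathbb{Q}_p\subseteq K_\mathcal{P}$, so the symbol is automatically $1$ and no condition arises. For a finite place over $p\equiv 3\pmod 4$ I would feed in the six-type dichotomy recorded before this theorem: $(\frac{-1,P}{\mathcal{P}})=1$ in types RR($p$), IR($p$), SR($p$), SI($p$), II($p$), whereas $(\frac{-1,P}{\mathcal{P}})=(-1)^{v_p(P)}$ in type SS($p$), together with the values of $(\frac{-1,m}{\mathcal{P}})$ furnished by Lemmas~2.1--2.4. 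Splitting on whether $p\mid N_{K/\mathbb{Q}}(m)$, the first regime reproduces the non-dyadic conditions (2-1)--(2-3) (up to the SS($p$) subtlety addressed below), while in the second regime $m$ is a unit at $\mathcal{P}$, so $(\frac{-1,m}{\mathcal{P}})=1$ and the requirement collapses to $(\frac{-1,P}{\mathcal{P}})=1$, i.e.\ to (4).

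For the dyadic places I would use Lemmas~1.1, 1.2 together with the $e(\cdot)$-expansions of Lemmas~2.5--2.8, splitting on the decomposition of $2$. When $2O_k$ does not split there is a single dyadic place, and Hilbert reciprocity applied to the already-determined symbols forces it to be $1$, so no condition survives. When $2O_k$ splits we have $D\equiv 1\pmod 8$ and local degree at most $2$: in the ramified split case SR(2), i.e.\ $D\equiv 1\pmod 8$ with $l\neq 0$, Lemma~1.2 gives $(\frac{-1,P}{\mathcal{P}})=1$ and Lemma~2.6 supplies the $m$-part, yielding (3-2); in SI(2) one has $K_\mathcal{P}\simeq\mathbb{Q}_2(\sqrt 5)$, again $(\frac{-1,P}{\mathcal{P}})=1$ by Lemma~1.2, and Lemma~2.8 gives (3-3); and in SS(2) Lemma~2.5 computes the $m$-part while the $P$-part is $1$ or $-1$ according as $P\equiv 1$ or $3\pmod 4$, i.e.\ according to the parity of $\alpha$, so reconciling the two produces (3-1).

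I expect the genuine obstacle to be the interaction of the global twist $(\frac{-1,P}{\mathcal{P}})$ with the $m$-conditions at the split types SS($p$) and SS(2), the only types where both factors are nontrivial. The delicate feature is that Lemmas~2.1, 2.3 (and Lemma~2.5) present a rigid dichotomy: either all the relevant symbols of $m$ are simultaneously $+1$ (exactly when condition $[m,p]$, resp.\ the congruence in (3-1), holds) or they are forced to be mixed, so multiplying by a uniform sign $(-1)^{v_p(P)}$ can never convert a mixed configuration into the all-$+1$ one required for isotropy. Consequently the cleanest way to make (2-2) and (4) disjoint and simultaneously correct is to normalize the factorization $M=\lambda^2PQm$ so that the rational prime factors $P,Q$ are extracted coprime to $N_{K/\mathbb{Q}}(m)$; then at an SS($p$) prime either $p\mid P$ (handled by (4)) or $p\mid N_{K/\mathbb{Q}}(m)$ with $p\nmid P$ (handled by the untwisted (2-2)), and at the dyadic place the only surviving twist is the $\alpha$-parity already built into (3-1). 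Verifying that this normalization is harmless, and that the $\alpha$-parity threads correctly through the congruences modulo $4$ and $8$ of Lemma~2.5 (including the ``at most multiplying $2$'' ambiguity when all $x_i,y_i$ are odd), is the main technical check; once it is in place, the conjunction of (1)--(4) is equivalent to $(\frac{-1,M}{\mathcal{P}})=1$ at every place, and Theorem~0.4 finishes the proof.
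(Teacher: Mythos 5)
Your overall route is the same as the paper's: reduce via Theorem 0.4 and $\sqrt{-1}\notin K$ to the requirement $(\frac{-1,M}{\mathcal{P}})=1$ at every place, strip off $\lambda^2$ and $Q$, use multiplicativity to split the symbol as $(\frac{-1,P}{\mathcal{P}})(\frac{-1,m}{\mathcal{P}})$, and then feed in the six-type values of the $P$-symbol together with Lemmas 2.1--2.8 for the $m$-symbol. The genuine gap sits exactly in the step you yourself defer as ``the main technical check'': the proposed normalization making $P$ coprime to $N_{K/\mathbb{Q}}(m)$ does not exist. The factorization $M=\lambda^2PQm$ with $m=X+Y\theta$ integral and $(x_1,x_2,y_1,y_2)=1$ is rigid: if $M=\lambda'^2P'Q'm'$ is another such factorization, then $m'=tm$ with $t\in\mathbb{Q}^*$, and comparing contents of the coefficient vectors forces $t=\pm1$, whence (by positivity of $P'Q'$) $m'=m$, $P'=P$, $Q'=Q$. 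In particular a prime $p$ dividing both $P$ and $N_{K/\mathbb{Q}}(m)$ cannot be absorbed into $m$ --- that would multiply all of $x_1,x_2,y_1,y_2$ by $p$ and destroy primitivity --- so the problematic configuration is not removable by any choice of factorization and has to be confronted directly.

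The irony is that your ``rigid dichotomy'' observation already resolves it; you only drew the wrong conclusion from it. Suppose $K\in SS(p)$, $p\mid P$ and $p\mid N_{K/\mathbb{Q}}(m)$. By primitivity not all four valuations $v_{\mathcal{P}_{ij}}(m)$ can be positive (all four $\geq1$ would give $p\mid m$ in $O_K$, hence, $p$ being odd, $p\mid(x_1,x_2,y_1,y_2)$), so the four $m$-symbols are never all $-1$; by Lemma 2.3 they are either all $+1$ or mixed, and multiplying by the uniform twist $(-1)^{v_p(P)}=-1$ can therefore never yield all $+1$. Hence in this configuration $M$ is simply never a sum of two squares, and the correct completion of the argument is to record this as an additional necessary condition, not to normalize it away. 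This is precisely the case the paper's own closing discussion skips (it only treats $p\mid P$ with $p\nmid N_{K/\mathbb{Q}}(m)$), so conditions (2-2) and (4) as printed genuinely do not cover it, and no amount of ``reconciling'' will make them do so. A second, smaller defect you inherit from the paper: collapsing $(\frac{-1,P}{\mathcal{P}})=1$ to condition (4), i.e.\ to $p\mid A$, is not an equivalence --- by the six-type table the $P$-symbol is also $1$ in types II($p$) and SI($p$), where $p\nmid A$; the condition actually forced by the symbols is $K\notin SS(p)$, of which (4) is sufficient but not necessary.
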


\begin{example}
    Let $S=-19-11\sqrt{5}+(1-3\sqrt{5})\sqrt{-2(5-2\sqrt{5})}$. We prove it a sum of two squares in $K=\mathbb{Q}(\sqrt{-2(5-2\sqrt{5})})$.
    One can calculate that $N_{K/\mathbb{Q}}(S)=2^4\times139921$, where 139921 a prime $\equiv1\pmod4$.
    So we only need to verify dyadic cases. However, $2O_{\mathbb{Q}(\sqrt{5})}$ is inert, so $K$ has only one dyadic place.
    Hence $x^2+y^2=S$ is solvable locally, therefore globally in $K$. Actually
    $$S=(\sqrt{5}+\frac{1-\sqrt{5}}{2}\sqrt{-2(5-2\sqrt{5})})^2+(1-(2+\sqrt{5})\sqrt{-2(5-2\sqrt{5})})^2.$$
\end{example}

\begin{example}
    Let $S=668-130\sqrt{17}-2(1+\sqrt{17})\sqrt{17-2\sqrt{17}}$. We prove it a sum of two squares in $K=\mathbb{Q}(\sqrt{17-2\sqrt{17}})$.
    We have $2^1||S$ and let $s=\frac{S}{2}$. Then $N_{K/\mathbb{Q}}(s)=1494272141$, a prime $\equiv1\pmod4$.
    Since $S$ is totally positive, we only need to verify dyadic cases. 
    $2O_K$ can be split into two prime ideals, each of which isomorphic to $\mathbb{Q}_2(\sqrt{-29})=
    \mathbb{Q}_2(\sqrt{3})$. Thus we only need to verify that $2|s-1$ by Lemma 1.2, which is true.
    Actually,
    $$S=(1-\sqrt{17}+3\sqrt{17-2\sqrt{17}})^2+(2+(\sqrt{17}-2)\sqrt{17-2\sqrt{17}})^2.$$
\end{example}

\begin{example}
    Let $S=-624+126\sqrt{17}-2(\sqrt{17}+1)\sqrt{-(17-2\sqrt{17})}$. We prove it a sum of two squares in $K=\mathbb{Q}(\sqrt{-(17-2\sqrt{17})})$.
    We have $2^1||S$ and let $s=\frac{S}{2}$. Then $N_{K/\mathbb{Q}}(s)=11^2\times53\times150961$,
    where 53 and 150961 are primes $\equiv1\pmod4$. Since $(\frac{17}{11})=1$,
    the 11-adic local field of $K$ at least contains a subfield isomorphic to $\mathbb{Q}_{11}(\sqrt{-1})$.
    Hence we only need to compute dyadic cases. $2O_k$ splits and $\mathfrak{p}O_K$ inerts
    , where $\mathfrak{p}$ is an arbitrary dyadic prime of $O_k$, implying that 
    both dyadic local fields of $K$ are isomorphic to 
    $\mathbb{Q}_2(\sqrt{5})$. So we just need to compute $s\pmod4$ by Lemma 1.2.
    By Lemma 2.8 and $1137-120\sqrt{5}\equiv1\pmod4$, we complete the proof.
    Actually, 
    $$S=(1-\sqrt{17}+3\sqrt{-(17-2\sqrt{17})})^2+(2+(\sqrt{17}-2)\sqrt{-(17-2\sqrt{17})})^2.$$
\end{example}

\section*{Acknowledgements}

The author is sincerely grateful for directions by Prof Hourong Qin, and support by National Natural Science Foundation of China (11971224).

\section*{References}

[1]K. Hardy, R. H. Hudson, D. Richman, K. S. Williams, N. M. Holtz, Calculation of the class numbers of imaginary cyclic quartic
fields, Math. Comp. 49 (1987) 615-620.

[2]O. O' Meara, Introduction to Quadratic Forms, Springer-Verlag, 1973.

[3]K. S. Williams, K. Hardy, C. Friesen, On the evaluation of the Legendre symbol $(\frac{A+B\sqrt{m}}{p})$, Acta Arithmetica XLV (1985),
255-272.

[4]H. Qin, The Sum Of Two Squares In A Quadratic Field, Communications in Algebra, 25:1 (1997), 177-184.

\end{document}